\newtheorem{lem}{Lemma}[section]
\newtheorem{cor}[lem]{Corollary}
\newtheorem{prop}[lem]{Proposition}
\newtheorem{thm}[lem]{Theorem}
\newtheorem{defin}[lem]{Definition}
\newtheorem{Ex}[lem]{Example}
\newtheorem{Remark}[lem]{Remark}
\newtheorem{Construction}[lem]{Construction}
\newtheorem{Notation}[lem]{Notation}
\newtheorem{Fact}[lem]{Fact}
\newtheorem{Discussion}[lem]{Discussion}
\newtheorem{Notationdefinition}[lem]{Definition/Notation}
\newtheorem{Remarkdefinition}[lem]{Remark/Definition}
\newenvironment{ex}{\begin{Ex}\rm}{\end{Ex}}
\newenvironment{remark}{\begin{Remark}\rm}{\end{Remark}}
\newenvironment{disc}{\begin{Discussion}\rm}{\end{Discussion}}
\newcommand{\PP}{\mathbb{P}}
\newcommand{\pperp}{\mathrel{\mbox{$\perp\hspace{-0.6em}\perp$}}}
\newcommand{\tensor}{\otimes}
\def\switch{{\mathop{\rm s}\nolimits}}
\def\It{{\widetilde{I}}}
\def\I{{I}}
\def\CI{\I^{\langle t \rangle}}
\def\CIo{\I^{\langle 1 \rangle}}
\def\CIn{\I^{\langle n \rangle}}
\def\CInmin{\I^{\langle n-1 \rangle}}
\def\CIt{\It^{\langle t \rangle}}
\def\CItn{\It^{\langle n \rangle}}
\def\CIS{\I^{\langle t \rangle}_S}
\def\CItS{\It^{\langle t \rangle}_S}
\def\CInN{\It^{\langle n \rangle}_N}
\def\CInS{\It^{\langle n \rangle}_S}
\def\CInSP{\It^{\langle n \rangle}_{S'}}
\def\CItSi{\It^{\langle t \rangle}_{S_i}}
\def\CPS{P^{\langle t \rangle}_S}
\def\CPSn{P^{\langle n \rangle}_S}
\def\CPSnP{P^{\langle n \rangle}_{S'}}
\def\CPT{P^{\langle t \rangle}_T}
\def\CPN{P^{\langle t \rangle}_N}
\def\CPnN{P^{\langle n \rangle}_N}
\def\CVar{\mathrm{Var}^{\langle t \rangle}_S}
\def\CVarn{\mathrm{Var}^{\langle n \rangle}_S}
\def\CVarnP{\mathrm{Var}^{\langle n \rangle}_{S'}}
\def\CVarT{\mathrm{Var}^{\langle t \rangle}_T}
\begin{document}

%\renewcommand{\baselinestretch}{1.4}
%\normalsize

\bibliographystyle{amsplain}

\title{Minimal primes of ideals arising from conditional independence statements}

\author[Swanson]{Irena Swanson}
\address[Swanson]{Department of Mathematics, Reed College, 3203
SE Woodstock Blvd, Portland, OR 97202, USA}
\email{iswanson@reed.edu}
\author[Taylor]{Amelia Taylor}
\address[Taylor]{Department of Mathematics and Computer
Science, Colorado College, 14 E. Cache La Poudre St.,
Colorado Springs, CO 80903, USA}
\email{amelia.taylor@coloradocollege.edu}

%\urladdr{http://people.reed.edu/~iswanson}
%\thanks{}
%\urladdr{http://faculty1.coloradocollege.edu/~ataylor}
%\date{\today}

%\dedicatory{}

\begin{abstract}
We consider ideals arising in the context of conditional independence
models that generalize the class of ideals considered by Fink~\cite{Fink}
in a way distinct from the generalizations of
Herzog-Hibi-Hreinsdottir-Kahle-Rauh~\cite{HHHKR} and
Ay-Rauh~\cite{AR}.
We introduce switchable sets to give a combinatorial description of the minimal prime ideals,
and for some classes we describe the minimal components.
We discuss many possible interpretations of the ideals we study,
including as $2 \times 2$ minors of generic hypermatrices.  
We also introduce a definition of diagonal monomial orders
on generic hypermatrices
and we compute some Gr\"obner bases.

\end{abstract}

\maketitle

\section{Introduction} \label{intro}

We present work on the primary decomposition of ideals
corresponding to conditional independence models.
One of our motivations is the work of Fink~\cite{Fink}
which solved a conjecture
posed by Cartwright and Engstr\"om (page 146 in~\cite{DSS}).
Herzog, Hibi, Hreinsdottir, Kahle and Rauh~\cite{HHHKR} were similarly
motivated to study a related set of ideals.
The ideals in~\cite{HHHKR} were independently investigated
by Ohtani~\cite{O},
without regard to conditional independence models,
and Ohtani obtained similar results.
After we completed the first version of this manuscript,
Ay and Rauh~\cite{AR} extended Herzog et al.~\cite{HHHKR}.
We discuss the relations between our results
and those of~\cite{AR, Fink, HHHKR, O}
after we make a few definitions.

Let $X_1, \ldots, X_n$ be $n$
discrete random variables and $1\leq t\leq n$.
The ideal $\CI$,
the main object of our study
and defined algebraically in Definition~\ref{defFIt},
corresponds to the conditional independence model
$$\{X_i\pperp X_j ~|~ X_T : \forall i\leq t, i < j \text{ and } T =
\{1,\ldots, n\}\setminus \{i,j\}\},$$
which is the model given by the set of pairwise Markov
conditions (see~\cite[page 32]{L}) on the graph with no edges on the
first $t$ vertices and a complete graph on the remaining $n-t$ vertices.
For background on conditional independence
models see~\cite{DSS,GSS, GMS, HHHKR, L, Sturm02}.
%For example, when $t = n$,
%the set of statements are the pairwise Markov conditions on the empty graph.
An easy consequence of the Clifford-Hammersley Theorem~\cite{L},
or Eisenbud and Sturmfels' work~\cite[Corollary 2.5]{ESbinom}, and
stated explicitly in Hosten and Shapiro's work~\cite[Theorem 2.1]{HS},
is that one primary component of the ideal $\CI$ is $\CI:\mathbf{x}^{\infty}$,
where $\mathbf{x}$ is the product of the indeterminates.
Sturmfels calls $\CI:\mathbf{x}^{\infty}$
the ``most important component''~\cite[page 116]{Sturm02},
and Lauritzen~\cite{L} calls it the graphical model.
We summarize several ways of thinking about this ideal
(via tensors, lattices, and hypermatrices) in Section~\ref{setup}
to set up a more in-depth discussion of its
relationship to $\CI$ and we use this discussion in our arguments.

This paper is about the ideals $\CI$
when $X_1, \ldots, X_n$ have an arbitrary but finite number of states,
and $t$ is any positive integer at most $n$.
Fink~\cite{Fink} studied the case $n = 3$ and $t = 1$.
%as well as that of~\cite{AR, Fink, HHHKR, O},
%with various further restrictions in the different papers.
%and $t$ is arbitrary in $\{1, \ldots, n\}$.
The binomial edge ideals considered by Ohtani~\cite{O}
and by Herzog et al. in~\cite{HHHKR}
intersect our class of ideals when $t = 1$ and $X_1$ is binary.
Ay and Rauh~\cite{AR} generalized~\cite{HHHKR}
and their ideals intersect our class of ideals when $t = 1$.
%(and $X_1$ has an arbitrary, but finite, number of states).
%We consider all $t = 1, \ldots, n$.
% and no restrictions on $X_1, \ldots, X_n$ other than having finite numbers of states.

Our main result, in Section~\ref{sectminprimescont},
is the description of the prime ideals minimal over $\CI$.
In Section~\ref{sectprimecomponent} we prove that the minimal
components of $\CIn$ are prime ideals,
while the work of Ay and Rauh~\cite{AR} establishes that $\CIo$ is
radical and therefore all the primary components are minimal and prime.
We give examples in Section~\ref{sectexamples} showing that
ideals $\CI$ can have embedded primes for $n\geq 3$ and $t > 1$.
Thus the ideals $\CI$ are not radical in general
and therefore are not the same as the ideals in~Herzog et al.~\cite{HHHKR},
Ohtani~\cite{O}, or Ay and Rauh~\cite{AR}.
Example~\ref{exItItt} shows that
the ideals $\CI$ are not lattice basis ideals.

In Section~\ref{sectadm}
we define a new combinatorial structure, \emph{$t$-switchable} set
and a corresponding equivalence relation.
We expect that these structures
%combinatorial descriptions
might be helpful in other contexts as well.
%we develop our combinatorial
%description and the corresponding ideals and structures quite generally.
%with as much generality as possible.
%Both structures may prove useful for other studies of primary
%decompositions of ideals or other combinatorial studies.
By~\cite{ESbinom},
the minimal primes of a binomial ideal consist of a set of variables and a
set of binomials in the remaining variables.
The content of
this work is in giving an effective combinatorial description of the
sets of variables and of the binomials,
and we do so by using $t$-switchable sets and the associated equivalence relations.

In Section~\ref{sectminprimescont}
we use a connection with Segre embeddings to prove that $\CItS$
(see Definition~\ref{defPScont})
are prime ideals,
thereby generalizing the fact that
$\CI:\mathbf{x}^{\infty}$ are prime ideals.
It is precisely such ideals $\CItS$, for restricted~$S$,
that are the binomial portion of the prime ideals minimal over $\CI$
(proof is in Theorem~\ref{thmminprimescont}). 

In Section~\ref{sectGB}
we introduce a notion of diagonal monomial orders for generic hypermatrices.  We use these orders to give Gr\"obner bases for $\CItS$.
This generalizes the well-known work
of Caniglia, Guccione, and Guccione~\cite{CGG} for generic matrices
and the work of 
Ha~\cite[Theorem 1.14]{H}
for $\CIn:\mathbf{x}^{\infty}$
in the reverse lexicographic monomial~order.

%In \cite[Theorem 1.14]{H},
%Ha provides a Gr\"obner basis for $\CItn$ in the
%reverse lexicographic monomial order.
%As part of our study we found a
%Gr\"obner basis for $\CIt$ in the lexicographic order,
%and we originally used this result to give a proof
%that $\CIt$ and more generally $\CItS$ (see
%Definition~\ref{defPScont}) are prime ideals.
%In this version, we use a connection with Segre embeddings
%to prove that $\CItS$ are prime ideals
%and therefore no longer need the Gr\"obner basis result for
%primeness.  However, we still include, in Section~\ref{sectGB}, a proof that
%a natural set of generators forms a Gr\"obner basis
%in all natural ``diagonal" orders,
%including for the lexicographic and reverse lexicographic orders.
%This work gives a Gr\"obner basis result for hypermatrices on par
%with the well-known work of Caniglia, Guccione, and Guccione~\cite{CGG} for
%matrices. 

\section{Definitions and connections with tensors and Segre embeddings}
\label{setup}
%\section{Set-up: Background, Indices, switches, distances, and determinants}

After setting up the notation,
we give an algebraic
definition for the conditional independence ideal $\CI$ from the introduction.
One of the prime ideals minimal over~$\CI$
is the ideal corresponding to the model for total independence of variables
(as opposed to conditional independence),
and in Discussion~\ref{discSegre}
we present many fruitful interpretations of this prime ideal.
In Example~\ref{exItItt} we show that $\CI$ are not lattice
basis ideals. 

%One way in which some conditional independence ideals are interesting is
%that there are numerous ways to describe and therefore study them.
%In particular,
%we discuss the many ways of thinking about the model for
%total independence variables
%since the corresponding ideal
%serves as one of the primes minimal over $\CI$.
%In Discussion~\ref{discSegre}
%we connect
%with the model for total independence of the
%variables. This ideal has numerous interpretations and serves as one
%minimal component of $\CI$. We review these connections as they are
%useful for later arguments. For a full analysis, it is best if we
%first develop a clear notation 
%and language for discussing minors of a hypermatrix.
%For related discussions see~\cite{B, H}.

Throughout we fix positive integers $n$ and $r_1, \ldots, r_n$,
the index set $N = [r_1] \times \cdots \times [r_n]$,
%a field,
and the polynomial ring $R$ over a field in variables $x_a$
as $a$ varies over $N$.
Let $M$ be the $r_1 \times \cdots \times r_n$ hypermatrix
whose $a$th entry is $x_a$.
In this context, this paper is about the
structure of certain determinantal ideals of this generic hypermatrix~$M$.

Let $L \subseteq [n]$.
For $a, b \in N$ define the {\bf switch} function $\switch(L,a,b)$
that switches the $L$-entries of $b$ into $a$:
$\switch(L,a,b)$ is an element of $N$ whose $i$th component is
$$
\switch(L,a,b)_i = \begin{cases}
b_i, & \text{if } i \in L; \cr
a_i, & \text{otherwise.} \cr
\end{cases}
$$
If $L = \{j\}$,
we simply write
$\switch(L,a,b) = \switch(j,a,b)$.
%For any two variables $x_a$ and $x_b$ we define
For any two indices $a$ and $b$ in $N$
we define
{\bf the distance} between them to be
$d(a,b) = \#\{i: a_i \not = b_i\}$. Note that $d(a,b) =
d(\switch(L,a,b), \switch(L,b,a))$.
%
%We study determinants of $2\times 2$ subarrays of $M$.
For any $L \subseteq [n]$ and $i \in [n]$ we define:
\begin{align*}
f_{L,a,b} &= x_a x_b - x_{\switch(L,a,b)} x_{\switch(L,b,a)}, \cr
f_{i,a,b} &= x_a x_b - x_{\switch(i,a,b)} x_{\switch(i,b,a)}. \cr
\end{align*}
We call the $f_{i,a,b}$ the {\bf $\bf 2 \times 2$ minors} of the hypermatrix $M$.
When $d(a,b) = 2$ and $a_i \not = b_i$,
we call $f_{i,a,b}$ a {\bf slice minor} of $M$.
By a {\bf slice submatrix} of $M$ we refer to any submatrix of $M$
consisting of all entries $M_{i_1, \ldots, i_n}$
with all but two of the indices identical.
Thus a slice minor of $M$ is simply a $2 \times 2$ minor of
a slice submatrix of $M$.
This notation provides flexibility over flattenings
for discussing certain subsets of the minors of a hypermatrix
such as the slice minors.
We also think of $f_{i,a,b}$ as a minor of a flattening of the hypermatrix,
using the $i$th component to index the rows.
More generally,
$f_{L,a,b}$ is a minor of a flattening of the hypermatrix,
where the rows are indexed by the components in $L$
(see also Discussion~\ref{discSegre}).  

\begin{defin}\label{defFIt}
For any $t \in [n]$, let
\begin{align*}
%\F_{L,K}
%&= \{f_{i,a,b} : (a,b) \in N, \{j: a_j \not = b_j\} \subseteq L, i \in K\}, \cr
%\CF &= \hbox{$\bigcup_{i=1}^{t} \bigcup_{j=i+1}^{n}$} \F_{\{i,j\},\{i\}}, \cr
%\CFt &= \{f_{i,a,b} : (a,b) \in N, i \in [t]\}.  \cr
%\CI & = (\CF), \text{ the ideal in $R$ generated by } \CF, \text{ and }\cr
%\CIt & = (\CFt), \text{ the ideal in $R$ generated by } \CFt. \cr
\CI & = (f_{i,a,b}: a, b \in N, d(a,b) = 2, i \in [t]), \cr
\CIt & = (f_{i,a,b}: a, b \in N, i \in [t]). \cr
\end{align*}
Note that the generators of $\CI$ (resp.\ $\CIt$)
are those slice (not necessarily slice) minors of~$M$
for which one of the two components that varies is $i \in [t]$.
Alternatively,
the generators
of $\CI$ (resp.\ $\CIt$)
are the slice (not necessarily slice) minors
of the generic
$r_1\times r_2\times \cdots \times r_t \times (r_{t+1}\cdots r_n)$ hypermatrix.
%The ideal $\CI$ is the 
%ideal corresponding to the conditional independence model
%given at the beginning of the introduction.
The conditional independence model
given at the beginning of the introduction
corresponds to the ideal $\CI$.
\end{defin} 

%\medskip
%The ideals $\CI$,
%%as opposed to ideals $\CIt$,
%%we present three interpretations:
%%the algebraic one in Definition~\ref{defFIt},
%defined algebraically in Definition~\ref{defFIt},
%are the ideals corresponding to the conditional independence models
%given at the beginning of the introduction,
%and their generating set consists of the slice minors of the generic
%$r_1\times r_2\times \cdots \times r_t \times (r_{t+1}\cdots r_n)$ hypermatrix.
%%Of course this generating set is a subset of the generating
%%set
%%of $\CIt$ consisting of the $2 \times 2$ minors of the generic hypermatrix.

%\medskip
%The ideals $\CI$, in contrast with ideals $\CIt$,
%do not have quite as many interpretations.
%We already stated two possible interpretations:  the
%algebraic definition in Definition~\ref{defFIt}, and a formulation in
%terms of a conditional independence model
%at the beginning of the introduction. Here is a third interpretation: 
%a generating set of $\CI$
%consists of the slice minors of the generic $r_1\times r_2\times \cdots
%\times r_t \times (r_{t+1}\cdots r_n)$ hypermatrix.
%%Of course this generating set is a subset of the generating
%%set
%%of $\CIt$ consisting of the $2 \times 2$ minors of the generic hypermatrix.

\begin{disc}\label{discSegre}
Now we connect the ideal $\CIt$,
generated by the $2\times 2$ minors of $M$
in which one of the entries being switched is at most $t$, to
other ideals in the literature to facilitate later arguments.
The following describe the same ideal:
\begin{enumerate}
\item
The ideal cutting out the rank one tensors in the flattenings of
  $V_1\tensor \cdots \tensor V_n$ of the form
  $V_i\tensor (\tensor_{j\neq i} V_j)$ as $i$ varies over $[t]$. % for $i \in [t]$.
(For background on tensors we recommend~\cite{B,DL}.)
\item
The defining ideal for the Segre embedding of
  $\PP(V_1)\times\cdots\times\PP(V_t)\times
\PP(V_{t+1}\tensor\cdots\tensor V_n)
\rightarrow\PP(V_1\tensor\cdots \tensor V_n)$.
(For background here we recommend~\cite{B}.)
\item
The ideal generated by all the $2\times 2$ minors of the generic
  $r_1\times r_2\times \cdots\times r_t\times(r_{t+1} \cdots r_n)$ hypermatrix.
\item
The lattice ideal, where the lattice is the kernel of the matrix of
computing the $1$-marginals for each $i  \in [t]$
so that for each possible $i$th state
we marginalize over the remaining variables.
  %where we fix the value of one random variable $i \in [t]$ and
  %marginalize over the remaining variables;
%there is a row for every possible state for every $i \in [t]$.
%In the notation of~\cite{GMS}
%  the first row, for any $t$, for example, is $\psi_{1}(0)$.  For
%  3 random variables, $2$ states for each random
%  variable, and $t = 3$, $A$ is a $6\times 8$ matrix.  
\item
The ideal corresponding to the total independence model on the
  first $t$ variables given the remaining variables.
\end{enumerate}
\end{disc}

The fact that these all define the same ideal is scattered through the
literature, but most of the key ideas are in~\cite{GMS}. For example,
the connection between (2), (4) and (5) follows from~\cite{GMS}
since the model is given as distributions in the image of a
monomial parameterization given by the marginals matrix,
% $A$,
and that monomial
parameterization is exactly the Segre map.
One argument that (2) is equivalent to (3) is in Ha~\cite{H}.
Finally,
it is well known
that a matrix (hypermatrix) is rank one if and only if its $2\times
2$ minors vanish and that such matrices (hypermatrices)
represent rank one tensors in
the corresponding tensor product of vector spaces.
% and Ha proves (2) if and only if (3) in~\cite{H}.

The many interpretations of $\CIt$ have several useful consequences:
\begin{enumerate}
\item
$\CIt$ is prime say by~\cite{H} (and we generalize this fact in Theorem~\ref{thmIStisprime});
%by~\cite{H}, $\CIt$ is prime;
\item
$\CI:\mathbf{x}^\infty = \CIt$
(by the Clifford-Hammersley Theorem~\cite{L}, \cite{ESbinom}, or \cite{HS});
\item
therefore $\CIt$ is a minimal prime component of $\CI$,
and
\item
$\CIt$ is the unique smallest binomial prime ideal that contains $\CI$
and that contains no variables.
\end{enumerate}

Since $\CI\subseteq \CIt$ and $\CIt$ is a lattice ideal for a
saturated lattice, it is natural to ask if $\CI$ is a lattice basis
ideal (as defined in~\cite{HS}) for some basis for the same lattice.  The following example
illustrates that this is not the case,
and that the lattice basis ideal is properly contained in~$\CI$.    

\begin{ex}\label{exItItt}
 For a simple illustration of how $\CI$ and $\CIt$ relate to
  lattice ideals, consider the example of three
  random variables, each with two states, and $t = 3$.   The
  $2\times 2\times 2$ hypermatrix has 6 faces and the determinants of
  these faces give six minimal generators of $\CI$.  There are
  six non-slice minors which we add in to generate~$\CIt$
(only three of these are needed to get a minimal generating
  set).  Finally, the lattice basis ideal is minimally generated by
  four binomials, which correspond to four of the six faces (depending
  on which basis one chooses) and is strictly contained in $\CI$.
\end{ex}

Therefore the primary decomposition of $\CI$ does not follow from~\cite{HS}.
%but from~\cite{ESbinom} we know that the associated
%prime ideals consist of a set of variables and a set of binomials
%in the remaining variables.
%Therefore,
By~\cite{ESbinom},
describing the minimal prime ideals of $\CI$
consists of establishing sets of variables and binomials.
We use \emph{$t$-switchable sets} $S$
and ideals $\CItS$ for this purpose (defined in
Section~\ref{sectadm}).  In Section~\ref{sectGB} we place these ideals
in the wider theory and prove that they are prime ideals.

\section{Switchable sets and connectedness}\label{sectadm}

In this section we set up the combinatorial structures used in the
main results. We indicate in Remark~\ref{rmk4Fink} how these
structures relate to those used by
Fink~\cite{Fink}.
%For the rest of the paper $t$ stands for an integer in $[n]$.

\begin{defin}\label{defadmcont}
Let $t \in [n]$.
A subset $S$ of $N$ is {\bf switchable in the first $\bf{t}$ components}
({\bf $\bf t$-switchable} for short)
if
for all $a,b \in S$ with $d(a,b) = 2$,
if $i \in [t]$,
then $\switch(i,a,b) \in S$.
\end{defin}

Certainly the empty set and the full set $N$ are $t$-switchable sets.
Note that the notion of $(n-1)$-switchable is identical to the notion of
$n$-switchable,
and that $t$-switchable is equivalent to the following condition:
For any $a,b \in N$
and any distinct $i \in [t]$ and $j \in [n]$,
$a, \switch(\{i,j\},a,b) \in S$
if and only if $\switch(i,a,b), \switch(j,a,b) \in~S$.
% for any $t\geq 1$.

\begin{lem}\label{bootstrappingLemma}
Let $S$ be a $t$-switchable subset of $N$,
let $a \in S$,
and let $b \in N$.
Let $L \subseteq [t]$ be such that for all $l \in L$,
$\switch(l,a,b) \in S$.
Then $\switch(L,a,b)\in S$.
\end{lem}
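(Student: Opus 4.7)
The plan is to induct on $|L|$, after a preliminary reduction: if $a_l = b_l$ for some $l \in L$, then $\switch(l,a,b) = a$ and removing $l$ from $L$ leaves $\switch(L,a,b)$ unchanged, so we may assume $a_l \neq b_l$ for every $l \in L$. The base cases $|L| = 0$ and $|L| = 1$ are then immediate from the hypothesis that $a$ and each $\switch(l,a,b)$ lie in $S$.

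The critical base case is $|L| = 2$, say $L = \{i,j\} \subseteq [t]$. Here the defining property of $t$-switchability cannot be applied directly to the pair $(a,b)$, since their distance need not equal $2$. Instead, I would apply it to the pair $c := \switch(i,a,b)$ and $d := \switch(j,a,b)$, both of which lie in $S$ by hypothesis. Under the reduction above, $c$ and $d$ agree with each other outside positions $i$ and $j$, so $d(c,d) = 2$; applying $t$-switchability with index $j \in [t]$ yields $\switch(j,c,d) \in S$. A direct component-by-component check shows $\switch(j,c,d) = \switch(\{i,j\},a,b)$, which is the desired element.

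For the inductive step with $|L| \geq 3$, I would fix any $i \in L$, set $L' = L \setminus \{i\} \subseteq [t]$, and re-apply the lemma with new base point $a^* := \switch(i,a,b) \in S$ and index set $L'$. To verify the single-switch hypothesis for this new instance, observe that for each $l \in L'$, $\switch(l,a^*,b) = \switch(\{i,l\},a,b)$, which lies in $S$ by the $|L|=2$ case already established, applied to the pair $\{i,l\}$. The inductive hypothesis then yields $\switch(L', a^*, b) \in S$, and a routine comparison of components gives $\switch(L', a^*, b) = \switch(L,a,b)$.

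I expect the $|L| = 2$ step to be the main obstacle: the $t$-switchability hypothesis only closes $S$ under single swaps between elements already at distance two, while the conclusion demands an element that differs from $a$ in two positions simultaneously. The trick that unlocks the argument is to switch not along the pair $(a,b)$ itself, whose distance is unrestricted, but along the ``opposite diagonal'' formed by $(\switch(i,a,b), \switch(j,a,b))$, which by construction is always at distance exactly two; once that reversal of perspective is in place, everything else is bookkeeping.
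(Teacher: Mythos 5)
Your proof is correct, and it is essentially the paper's argument: both proceed by induction on $|L|$, and both hinge on the same key observation that $\switch(i,\cdot,b)$ and $\switch(j,\cdot,b)$ of a common base point form a distance-two pair in $S$ to which the $t$-switchability axiom applies, producing $\switch(\{i,j\},\cdot,b)$. The only cosmetic difference is that you establish the $|L|=2$ case first and then peel off one index at a time by re-basing at $\switch(i,a,b)$, whereas the paper runs the induction top-down by applying the same diagonal trick to $\switch(L\setminus\{i,j\},a,b)$.
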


\begin{proof}
%We note that if $t =1$ the statement is trivial, so we proceed with
%$t\geq 2$.
We prove this by induction on $|L|$.
If $|L| \le 1$,
this is the assumption.
If $|L| \ge 2$,
let $i, j$ be distinct elements in $L$.
By induction,
$\switch(L',a,b) \in S$ for all $L' \subseteq L$
with $|L'| < |L|$.
In particular,
if $i, j \in L$ and $L_0 = L \setminus \{i,j\}$, then
$c = \switch(L_0,a,b)$,
$d = \switch(L_0 \cup \{i\},a,b)$,
$e = \switch(L_0 \cup \{j\},a,b) \in S$.
Also,
$\switch(i,c,b) = d$ and $\switch(j,c,b) = e$ are in $S$,
and therefore, since $S$ is $t$-switchable,
$\switch(L,a,b) = \switch(\{i,j\},c,b) \in S$.
\end{proof}

\begin{defin}\label{defconnected}
Let $S$ be a subset of $N$.
We say that $a, b \in S$ are {\bf connected in $\bf S$}
if there exist $a_0 = a, a_1, a_2, \ldots, a_{k-1}, a_k = b \in S$
such that for all $j = 1, \ldots, k$,
$a_{j-1}$ and $a_j$ differ only in one component.
We refer to $a_0, \ldots, a_k$ loosely as a {\bf path} from $a$ to $b$,
and we refer to $a_1, \ldots, a_{k-1}$
as an {\bf intermediate subpath} from $a$ to $b$.
Clearly any elements on the path from $a$ to $b$
are mutually connected.
Also,
connectedness is naturally an equivalence relation on $S$.
\end{defin}

%We note that Ay and Rauh~\cite{AR} make use of a notion
%they call ``admissible path'',
%and their path is different from ours.
%{\bf HERE}
%to establish 
%a Gr\"obner basis for $\CIo$.  Their notion of path is
%different from ours:
%their notion of path enables them to get square-free leading terms
%in a Gr\"obner basis,
%whereas our paths allow transitivity via concatenation
%and yield an equivalence relation.
%%We remark on their use
%%of a path in combination with our use of a different path as a
%%possible illustration that notions of paths may be fundamental to the
%%types of arguments needed in this study.  

The following is immediate from the definition:

\begin{lem}\label{lemdist2conn}
Let $S$ be a $t$-switchable subset of $N$.
Let $a,b \in S$.
If $d(a,b) \le 1$,
then $a$ and $b$ are connected.
If $d(a,b) = 2$
and $a_i \not = b_i$ for some $i \in [t]$,
then $a$, $b$, $\switch(i,a,b)$, $\switch(i,b,a)$
are pairwise connected in $S$,
and both $\switch(i,a,b)$ and $\switch(i,b,a)$
form an intermediate subpath from $a$ to $b$.
\qed
\end{lem}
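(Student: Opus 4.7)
The plan is to unwind definitions. For the first assertion, if $d(a,b) = 0$ then $a = b$ and the trivial one-element sequence is a path, while if $d(a,b) = 1$ then by definition of $d$, the tuples $a$ and $b$ themselves differ in exactly one component, so the length-one sequence $a_0 = a, a_1 = b$ qualifies as a path in $S$.

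For the second assertion, suppose $d(a,b) = 2$, let $i \in [t]$ be an index where $a_i \neq b_i$, and let $j$ be the unique other index where $a$ and $b$ differ. The first thing to check, from the definition of $\switch$, is that $\switch(i,a,b)$ agrees with $a$ in every component except the $i$th (where it takes the value $b_i$), so in particular $\switch(i,a,b)$ differs from $a$ only in component $i$ and differs from $b$ only in component $j$; symmetrically, $\switch(i,b,a)$ differs from $b$ only in component $i$ and from $a$ only in component $j$. Both of these intermediate points lie in $S$: applying the $t$-switchable condition to the pair $a,b$ (resp.\ $b,a$) with $i \in [t]$ puts $\switch(i,a,b)$ (resp.\ $\switch(i,b,a)$) into $S$.

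With these four elements in $S$, I then verify pairwise connectedness by exhibiting explicit short paths: $a$ and $\switch(i,a,b)$ are adjacent (differ in one component), as are $\switch(i,a,b)$ and $b$, so $a, \switch(i,a,b), b$ is a path from $a$ to $b$ with $\switch(i,a,b)$ as its intermediate subpath. The analogous path $a, \switch(i,b,a), b$ handles the other case. Finally, the two intermediate points $\switch(i,a,b)$ and $\switch(i,b,a)$ differ from each other in exactly two components (namely $i$ and $j$), but each is connected to both $a$ and $b$ by one-step paths, so chaining these yields connectedness of every pair among the four listed elements.

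There is essentially no obstacle here; the only thing to be careful about is bookkeeping the components at which the four tuples agree and disagree, so that each claimed edge of the path really is a difference in exactly one coordinate. Once that is checked, both conclusions follow directly from Definition~\ref{defconnected}.
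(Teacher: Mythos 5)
Your proof is correct and is exactly the definition-unwinding the paper has in mind: the paper states this lemma as ``immediate from the definition'' and gives no proof, and your verification that $\switch(i,a,b)$ and $\switch(i,b,a)$ lie in $S$ by $t$-switchability and each differ from $a$ and from $b$ in exactly one component is the intended argument.
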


\begin{lem}\label{lemconnected}
Suppose that $a$ and $b$ are connected in a $t$-switchable set $S$.
Let $a_0 = a, a_1, a_2$,
$\ldots, a_{k-1}, a_k = b$
be a path from $a$ to $b$.
Let $L \subseteq [t]$ and $i,j \in \{0, \ldots, k\}$.
Then $\switch(L,a_i,a_j) \in S$ and
is connected to $a$ in $S$.
\end{lem}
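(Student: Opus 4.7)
The plan is to reduce to the single-coordinate case, prove that by induction on path length, and then bootstrap to arbitrary $L$. First I would observe that by restricting to the sub-path $a_{\min(i,j)}, \ldots, a_{\max(i,j)}$ (reversing it if $i > j$), it suffices to prove: for any path $c_0, c_1, \ldots, c_m$ in $S$ and any $L \subseteq [t]$, $\switch(L, c_0, c_m) \in S$ and is connected to $c_0$ in $S$. Since the endpoints of any such sub-path are connected to $a$, this implies the conclusion of the lemma.

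For the single-coordinate case $L = \{\ell\}$ with $\ell \in [t]$, I would induct on $m$. The cases $m \le 1$ are immediate from the definition of $\switch$. For $m \ge 2$, I analyze the final step $c_{m-1} \to c_m$. If it does not change coordinate $\ell$, then $\switch(\ell, c_0, c_m) = \switch(\ell, c_0, c_{m-1})$ and the inductive hypothesis on the sub-path of length $m-1$ finishes the argument. If it changes $\ell$ but the penultimate step changes a different coordinate $\alpha$, then $c_{m-2}$ and $c_m$ lie in $S$ at distance exactly two, differing in $\alpha$ and $\ell$; since $\ell \in [t]$, $t$-switchability provides $c_{m-1}' := \switch(\ell, c_{m-2}, c_m) \in S$, and replacing $c_{m-1}$ by $c_{m-1}'$ yields a new path in $S$ whose final step changes $\alpha$ rather than $\ell$, reducing to the previous sub-case. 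If both of the last two steps change $\ell$, then $c_{m-2}$ and $c_m$ agree outside coordinate $\ell$: either they coincide (truncate to length $m-2$) or they differ only in $\ell$ (giving a valid path of length $m-1$ that skips $c_{m-1}$). In every sub-case the inductive hypothesis applies to a strictly shorter path, and connectedness of the constructed element to $c_0$ is preserved at each step.

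To handle general $L$, I would induct on $|L|$, mirroring the argument of Lemma~\ref{bootstrappingLemma}. Picking distinct $i, j \in L$ and setting $L_0 = L \setminus \{i,j\}$, the inductive hypothesis yields $c = \switch(L_0, c_0, c_m)$, $d = \switch(L_0 \cup \{i\}, c_0, c_m)$, and $e = \switch(L_0 \cup \{j\}, c_0, c_m)$ in $S$, each connected to $c_0$. A direct computation gives $\switch(L, c_0, c_m) = \switch(j, d, e)$, which lies in $S$ by Lemma~\ref{bootstrappingLemma}. Since $d$ and $e$ differ only (if at all) in coordinates $i, j \in [t]$, they are either equal, at distance one, or at distance two with differing coordinates in $[t]$; in each case Lemma~\ref{lemdist2conn} shows $\switch(j, d, e)$ is connected to $d$, and therefore to $c_0$.

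The main obstacle is the middle sub-case of the single-coordinate induction, where one must exploit $t$-switchability to shift an $\ell$-change one step earlier in the path while remaining inside $S$; the other sub-cases reduce to strictly shorter paths fairly mechanically, and the general-$L$ bootstrap is largely bookkeeping on top of Lemmas~\ref{bootstrappingLemma} and~\ref{lemdist2conn}.
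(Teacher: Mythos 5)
Your proof is correct and follows essentially the same strategy as the paper's: handle the singleton case $L=\{\ell\}$ by induction on path length using $t$-switchability on distance-two pairs, then pass to general $L$ via Lemma~\ref{bootstrappingLemma}, with connectedness coming from Lemma~\ref{lemdist2conn}. The only difference is cosmetic — you peel off the last step of the path with a three-way case analysis, while the paper peels off the first step ($a_i,a_{i+1},\switch(l,a_{i+1},a_j)$ is a path of length two, to which the base case applies), which is slightly more economical.
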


\begin{proof}
%By Lemma~\ref{bootstrappingLemma}
%it suffices to prove that for all $l \in K \subseteq [t]$,
%$\switch(l,a_i,a_j) \in S$.
First suppose that $L = \{l\}$.
If $|j - i| \le 2$,
then $\switch(l,a_i,a_j)$ is either $a_i \in S$ or it is in~$S$
by Lemma~\ref{lemdist2conn}.
So we may assume that $|j - i| \ge 3$.
Without loss of generality, assume $i<j$.  By induction on $|j-i|$
we have that $\switch(l,a_{i+1},a_j) \in S$.
Then $a_i, a_{i+1}, \switch(l,a_{i+1},a_j)$ is a path in~$S$, and
therefore by induction,
$\switch(l,a_i, a_j) = \switch(l,a_i, \switch(l,a_{i+1},a_j)) \in S$.
Hence by Lemma~\ref{bootstrappingLemma},
$\switch(L,a_i,a_j) \in S$ for all $L \subseteq [t]$.
Furthermore,
if $L = \{l_1, \ldots, l_k\}$,
then
$a_i, \switch(\{l_1\},a_i,a_j),
\switch(\{l_1,l_2\},a_i,a_j),
\ldots,
\switch(\{l_1,l_2,\ldots, l_k\},a_i,a_j)$
is a path in $S$,
so that $\switch(L,a_i,a_j)$ is connected to $a_i$ and hence to $a$.
\end{proof}

In the definition of connectedness,
the set of indices where the consecutive $a_i$ differ may not all be distinct,
and if $L \not \subseteq [t]$,
then $\switch(L,a_i,a_j)$ need not be connected to $a$,
as we show by the next example.

\begin{ex}\label{exnotblock}
Set
\begin{align*}
S &=
\{1,2\} \times \{1\} \times \{1\} \times \{1,2\} \cr
&\hskip2em
\cup
\{1,2\} \times \{1\} \times \{1,2\} \times \{2\} \cr
&\hskip2em
\cup
\{1,2\} \times \{1,2\} \times \{2\} \times \{2,3\} \cr
&\hskip2em
\cup
\{1,2\} \times \{2\} \times \{2,3\} \times \{2,3\}. \cr
%&\hskip2em
%\cup
%\{1,2\} \times \{2,3\} \times \{2,3\} \times \{2,3\}. \cr
\end{align*}
Note that $S$ is $1$-switchable and consisting of a single equivalence class.
The elements $(1,1,1,1)$ and $(2,2,3,3)$ are connected in $S$,
but there is no path between them of length~$4$.
Also,
$\switch(2,(1,1,1,1),(2,2,3,3)) \not \in S$.
(See the comment after Theorem~\ref{thmblockdec}
for another point of view.)
\end{ex}

%\begin{ex}\label{exnotblock}
%Set
%\begin{align*}
%S &=
%\{1,2\} \times \{1\} \times \{1\} \times \{1\} \times \{1\} \cr
%&\hskip2em
%\cup
%\{1,2\} \times \{1\} \times \{1\} \times \{2\} \times \{1,2\} \cr
%&\hskip2em
%\cup
%\{1,2\} \times \{1\} \times \{2\} \times \{2,3\} \times \{1,2\} \cr
%&\hskip2em
%\cup
%\{1,2\} \times \{2\} \times \{2,3\} \times \{2,3\} \times \{1,2\} \cr
%&\hskip2em
%\cup
%\{1,2\} \times \{2,3\} \times \{2,3\} \times \{2,3\} \times \{3\}. \cr
%\end{align*}
%Note that $S$ is $1$-switchable and consisting of a single equivalence class.
%The elements $(1,1,1,1,1)$ and $(2,3,3,3,3)$ are connected in $S$,
%but there is no path between them that does not repeat some indices.
%Also,
%$\switch(2,(1,1,1,1,1),(2,3,3,3,3)) \not \in S$.
%\end{ex}

%By~\cite{ESbinom},
%any minimal prime ideal over a binomial ideal
%is the sum of an ideal generated by variables
%and an ideal generated by binomials in the remaining variables.
%By Fink's~\cite{Fink},
%the minimal prime ideals over $\CIo$ when $n = 3$
%are of the form
%$P_G = P_G^{(0)}+P_G^{(1)}$,
%where $P_G^{(0)}$ is monomial and $P_G^{(1)}$ is binomial.
%For the general binomial ideal $\CI$ for arbitrary $t$ and $n$,

\section{Prime ideals minimal over $\CI$}\label{sectminprimescont}

This section has two main goals.
One is to prove (in Theorem~\ref{thmCItSisprime})
that $\CItS$ (see Definition~\ref{defPScont})
are prime ideals.
The second is to prove that
for any $n$ and any $t\in[n]$,
the prime ideals minimal over~$\CI$
are of the form~$\CPS$ as $S$ varies over maximal $t$-switchable subsets of $N$
(see Definition~\ref{defmaxtadm}).
%We address the primary decomposition of $\CI$ when $t = 1$ in
%Section~\ref{sectfinkradical},
At the end of the section we look at $\CIo$ more closely,
especially when $n = 3$.
We examine the minimal components of $\CI$ when $t = n$
in Section~\ref{sectprimecomponent}.

%While our final proofs do not
%require us to know the Gr\"obner basis
%we feel that extending the work in~\cite{H} is worth including.
%a result for
%determinental ideals of hypermatrices more on par, in terms of
%monomial orders, with that of [sturm & GCC ref here]
%is worth including.  Or Ha gives only one order and this argument
%covers all diagonal orders and therefore establishes a result for
%hypermatrices on par with that of [Sturm & GCC] for determinental
%ideals.

%Fink~\cite{Fink} gives the primary decomposition of ideals $\CIo$
%when $n = 3$ and Ay and Rauh~\cite{AR} give the decomposition of
%ideals $\CIo$ for any $n$.

The following definition gives the notation for the variable and pure binomial
parts of the minimal prime ideals for $\CI$.  

\begin{defin}\label{defPScont}
Let $S$ be $t$-switchable.
Define
$$
\CItS = (f_{i,a,b} \mid i\in [t], a,b\ \text{connected in } S),
\ \CVar = (x_a : a \not \in S),
\text{ and }
\CPS = \CVar + \CItS.
$$
\end{defin}

\begin{prop}\label{propQscontainsCI}
If $S$ is $t$-switchable,
then $\CPS$ contains $\CI$.
\end{prop}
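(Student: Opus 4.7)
The plan is to verify that every generator $f_{i,a,b}$ of $\CI$ lies in $\CPS = \CVar + \CItS$. Fix such a generator, so $i\in[t]$ and $d(a,b)=2$. If $a_i=b_i$, then $\switch(i,a,b)=a$ and $\switch(i,b,a)=b$, so $f_{i,a,b}=0$ trivially. Otherwise $a_i\neq b_i$ and there is a unique $j\neq i$ with $a_j\neq b_j$. Set $c=\switch(i,a,b)$ and $d=\switch(i,b,a)$, so $f_{i,a,b}=x_ax_b-x_cx_d$, and note $d(c,d)=2$ with $c_i\neq d_i$, and $\switch(i,c,d)=a$, $\switch(i,d,c)=b$.

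The key bookkeeping step is to observe that $t$-switchability, applied once in each direction, yields
\[
\{a,b\}\subseteq S\iff \{c,d\}\subseteq S.
\]
The forward direction is the definition of $t$-switchability applied to $a,b$; the reverse is the same definition applied to $c,d$ (using $d(c,d)=2$ and $c_i\neq d_i$, which gives $a=\switch(i,c,d)\in S$ and $b=\switch(i,d,c)\in S$).

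From here the proof splits into two cases. If all four of $a,b,c,d$ lie in $S$, then the path $a,c,b$ witnesses that $a$ and $b$ are connected in $S$ (since $d(a,c)=d(c,b)=1$), so $f_{i,a,b}\in\CItS$ by Definition~\ref{defPScont}. Otherwise, by the equivalence above, it is not the case that $\{a,b\}\subseteq S$ and it is not the case that $\{c,d\}\subseteq S$; hence at least one of $x_a,x_b$ and at least one of $x_c,x_d$ belongs to $\CVar$, so each monomial in $f_{i,a,b}$ lies in $\CVar$, and thus $f_{i,a,b}\in\CVar\subseteq\CPS$.

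There is no genuine obstacle here: the only subtlety is recognizing the symmetric role played by the pair $(a,b)$ and the pair $(c,d)$ under the switch at position $i$, which forces the biconditional above and collapses what might naively look like many case distinctions into just the two cases treated.
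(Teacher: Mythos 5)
Your proof is correct and follows essentially the same route as the paper's: use $t$-switchability in both directions to conclude that either all four indices $a,b,\switch(i,a,b),\switch(i,b,a)$ lie in $S$ (whence $a$ and $b$ are connected and $f_{i,a,b}\in\CItS$), or each of the two monomials of $f_{i,a,b}$ contains a variable from $\CVar$. Your explicit treatment of the degenerate case $a_i=b_i$ and the upfront biconditional are just minor repackagings of the paper's argument.
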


\begin{proof}
We need to prove that $f_{i,a,b} \in \CPS$
for any $a, b \in N$ differing exactly in components~$i \in [t]$
and $j \in [n] \setminus \{i\}$.
First suppose that $a \not \in S$.
Note that $\switch(i,a,b)$ and $\switch(i,b,a)$
differ exactly in components~$i$ and $j$,
%and that $\switch(i,\switch(i,a,b),\switch(i,b,a)) = a$,
so that $S$ being $t$-switchable implies
either $\switch(i,a,b)$ or $\switch(i,b,a)$ is not in $S$.
Thus either
$x_{\switch(i,a,b)}$ or $x_{\switch(i,b,a)}$ is in $\CPS$,
so that $f_{i,a,b} \in \CPS$.
Thus we may assume that $a \in S$,
and similarly that $b \in S$.
But then by Lemma~\ref{lemdist2conn},
$a$ and $b$ are connected in $S$,
so that $f_{i,a,b} \in \CPS$.
\end{proof}

\begin{thm}\label{thmIStisprime}
Let $1 \le t \le n$.  If $S$ is an equivalence class with respect to
connectedness within some $t$-switchable set, then $\CItS$ is a
prime ideal.
\end{thm}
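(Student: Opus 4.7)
The plan is to reduce the primality of $\CItS$ to the already-known primality of $\CIt$ (item~(1) in the list after Discussion~\ref{discSegre}) by exhibiting $\CItS$ as the analog of $\CIt$ for a smaller generic hypermatrix. The crucial input is a structural description of the equivalence class $S$.

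\textbf{Step 1 (structure of $S$).} Let $A_i \subseteq [r_i]$ be the image of $S$ under the $i$-th coordinate projection for each $i \in [t]$, and let $C \subseteq [r_{t+1}] \times \cdots \times [r_n]$ be the image of $S$ under the projection to the last $n-t$ coordinates. I claim $S = A_1 \times \cdots \times A_t \times C$. The inclusion $\subseteq$ is immediate. For $\supseteq$, given $(\alpha_1, \ldots, \alpha_t, c)$ on the right, choose $d \in S$ with last $n-t$ coordinates equal to $c$, and for each $i \in [t]$ choose $b^{(i)} \in S$ with $b^{(i)}_i = \alpha_i$. Since $S$ is a single equivalence class, $d$ is connected to each $b^{(i)}$ in $S$, so Lemma~\ref{lemconnected} with $L = \{i\}$ yields $\switch(\{i\}, d, b^{(i)}) \in S$. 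Iterating this for $i = 1, 2, \ldots, t$ (updating $d$ to the new point after each step) changes the first $t$ coordinates to $\alpha_1, \ldots, \alpha_t$ one at a time while keeping the last $n-t$ coordinates fixed at $c$; the final point is $(\alpha_1, \ldots, \alpha_t, c) \in S$.

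\textbf{Step 2 (identify $\CItS$ as a smaller $\CIt$).} Every generator $f_{i,a,b}$ of $\CItS$ has all four monomial indices in $S$: the indices $a, b$ are in $S$ by hypothesis, and $\switch(i,a,b)$ and $\switch(i,b,a)$ lie in $S$ by Lemma~\ref{lemconnected} with $L = \{i\}$. Using Step~1, identify $S$ bijectively with $A_1 \times \cdots \times A_t \times C$ and regard $C$ as an abstract index set of size $|C|$. Under this identification the generators of $\CItS$ are exactly all $2 \times 2$ minors in the first $t$ directions of the generic $|A_1| \times \cdots \times |A_t| \times |C|$ hypermatrix $M'$; that is, they generate the ideal analogous to $\CIt$ for $M'$ with parameters $n' = t+1$ and $t' = t$.

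\textbf{Step 3 (primality via Segre).} By Discussion~\ref{discSegre}(2)--(3) applied to $M'$, this ideal is the defining ideal of the Segre embedding
\[
\PP^{|A_1|-1} \times \cdots \times \PP^{|A_t|-1} \times \PP^{|C|-1} \hookrightarrow \PP^{|S|-1},
\]
and is therefore prime. Since all generators of $\CItS$ lie in the polynomial subring $k[x_a : a \in S]$ of $R$, primality there extends to primality in $R$, because $R/\CItS$ is a polynomial ring over the quotient of $k[x_a : a \in S]$ by the contracted ideal.

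The main obstacle is Step~1, the clean product description $S = A_1 \times \cdots \times A_t \times C$. Once that structural identification is in hand, the rest is a direct reduction to the classical primality of Segre defining ideals.
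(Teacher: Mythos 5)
Your proposal is correct and follows essentially the same route as the paper: both identify $S$ with a product $A_1\times\cdots\times A_t\times C$ so that $\CItS$ becomes the ideal of $2\times 2$ minors of the flattenings of a smaller generic hypermatrix, and both then invoke the Segre/rank-one-tensor interpretation of Discussion~\ref{discSegre} to conclude primality. Your Step~1 just makes explicit the product structure of the equivalence class that the paper's proof uses implicitly when it asserts that the elements of $S$ ``naturally form a matrix $M_i$.''
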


\begin{proof}
Since $S$ is an equivalence class with respect to connectedness
within some $t$-switchable set, for any $a,b\in S$ and any $i\leq t$,
$\switch(i,a,b)$ and $\switch(i,b,a)$ are in $S$.  Therefore if we
fix $i\leq t$, the elements in $S$ naturally form a matrix $M_i$ with
rows indexed by the $i$th
components and columns indexed by the
remaining $n-1$ components.
Furthermore, each generator $f_{i,a,b}$ of~$\CItS$
is a $2\times 2$ minor in $M_i$,
and $\CItS$ is generated
by all the $2\times 2$ minors of the matrices $M_1, \ldots, M_t$.
For each $i = 1, \ldots, t$,
let $s_i$ be the number of rows of $M_i$,
and let $s_{t+1}$ be the number of tuples that occur as the last
$n-t$ entries in elements in $S$.
Let $V_i$ be a vector space of dimension $s_i$ for $1\leq i\leq t+1$.
Then the $2\times 2$ minors of $M_i$
naturally cut out the rank 1 tensors in the flattenings
$V_i\tensor(\tensor_{j\neq i} V_j)$ of the tensor $V_1\tensor \cdots
\tensor V_t\tensor V_{t+1}$.
Hence by Discussion~\ref{discSegre},
the ideal $\CItS$ is a prime ideal.
\end{proof}

\begin{remark}\label{rmkCIbinommin}
Let $S$ be a $t$-switchable set and $\mathbf{x}_S= \prod_{a\in S}x_a$.
Define
$$\CIS = (f_{i,a,b} \mid i\in [t], a,b\ \text{connected in } S, d(a,b) = 2).$$
Discussion~\ref{discSegre} and the previous argument imply that
$\CItS = \CIS:\mathbf{x}_S$.  (Previously we knew this only for $S = N$.)
Thus the unique smallest binomial prime ideal that
contains $\CIS$ and that contains no monomials equals $\CItS$.
\end{remark}

\begin{thm}\label{thmCItSisprime}
If $S$ is a $t$-switchable set,
then the ideals $\CItS$ and $\CPS$ are prime.
\end{thm}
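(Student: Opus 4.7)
The plan is to reduce to Theorem~\ref{thmIStisprime} via a decomposition of $S$ into its equivalence classes under connectedness. First I would dispense with $\CPS$ by observing that $\CVar$ kills exactly the variables outside $S$, and every generator of $\CItS$ lies in the polynomial subring $R_S = k[x_a : a \in S]$ (since connected elements belong to $S$). So $R/\CPS \cong R_S/\CItS$, and the theorem reduces to showing $\CItS$ is prime in $R_S$.

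Next I would write $S = S_1 \sqcup \cdots \sqcup S_k$ as the disjoint union of its connectedness equivalence classes. A short verification shows that each $S_j$ is itself $t$-switchable and consists of a single equivalence class: if $a,b \in S_j$ differ in components $i \in [t]$ and $\ell \in [n]$, then $\switch(i,a,b) \in S$ (since $S$ is $t$-switchable) differs from $a$ in only one coordinate, hence belongs to the same equivalence class $S_j$. Because every generator $f_{i,a,b}$ of $\CItS$ involves connected pairs $a,b$ lying in a common $S_j$, we obtain $\CItS = \sum_{j=1}^k \It^{\langle t \rangle}_{S_j}$, where the summands live in polynomial subrings $R_{S_j} = k[x_a : a \in S_j]$ on pairwise disjoint sets of variables whose product is $R_S$.

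By Theorem~\ref{thmIStisprime}, each $\It^{\langle t \rangle}_{S_j}$ is prime in $R_{S_j}$. The final and main step is to conclude that the sum of these primes (in disjoint variable sets) is itself prime. I would do this via the Segre interpretation in Discussion~\ref{discSegre}: each quotient $R_{S_j}/\It^{\langle t \rangle}_{S_j}$ is the homogeneous coordinate ring of a Segre variety, and is therefore geometrically integral over the base field. Hence $R_S/\CItS$ is isomorphic to the tensor product $\bigotimes_{j=1}^k R_{S_j}/\It^{\langle t \rangle}_{S_j}$ of geometrically integral $k$-algebras, which is again a domain. This proves $\CItS$ prime, and the first-paragraph reduction then yields $\CPS$ prime.

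The main obstacle lies in this last step. Over a non-algebraically-closed field, a sum of prime ideals supported in disjoint variable sets need not be prime, so merely invoking the primeness statement of Theorem~\ref{thmIStisprime} is insufficient. What rescues the argument is the stronger geometric integrality of the Segre coordinate rings, which requires reaching past the bare conclusion of Theorem~\ref{thmIStisprime} into the geometric content of Discussion~\ref{discSegre}.
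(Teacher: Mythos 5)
Your proof is correct and follows essentially the same route as the paper's: decompose $S$ into its connectedness classes, apply Theorem~\ref{thmIStisprime} to each class, and resolve the non-algebraically-closed-field subtlety by noting that the Segre coordinate rings remain domains after base change. The paper packages that last step as an explicit passage to $\overline{R}$ (where sums of primes in disjoint variable sets are prime) followed by faithfully flat descent, which is the same content as your tensor-product-of-geometrically-integral-algebras argument.
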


\begin{proof}
Partition $S = S_1 \cup \cdots \cup S_m$, into its equivalence classes
with respect to connectedness.
Therefore, the $S_i$ are pairwise disjoint.
Then $\CItS= \sum_{i=1}^m \CItSi$.
By Theorem~\ref{thmIStisprime},
each $\CItSi$ is a prime ideal.
%Let $\overline k$ be the algebraic closure of $k$,
%and let $\overline R = \overline k[x_a : a \in N]$.
Let $\overline R$ be the polynomial ring in the same variables as $R$
but over the algebraic closure of the underlying field.
Theorem~\ref{thmIStisprime} shows that each
$\CItSi \overline R$ is a prime ideal as well.
It is well known that in the polynomial ring
over an algebraically closed field,
if the generators of two prime ideals
are polynomials in disjoint sets of variables,
then the sum of the two prime ideals is also prime.
Thus $\CItS \overline R$ and $\CPS \overline R$ are prime ideals.
But the generators of these two prime ideals are in $R$,
so since $\overline R$ is a a faithfully flat extension of $R$,
$\CItS$ and $\CPS$ are
contractions of the prime ideals
$\CItS \overline R$ and $\CPS \overline R$ respectively,
and hence are prime themselves.
\end{proof}

Having established that $\CPS$ is prime, we set up those
$t$-switchable sets which correspond to minimal prime ideals for $\CI$.

\begin{defin}\label{defmaxtadm}
Let $S$ be $t$-switchable.
We say that $S$ is {\bf maximal $\bf t$-switchable}
if for all $t$-switchable subsets $T$ of $N$ properly containing $S$,
$\CPS$ and $\CPT$ are incomparable.
\end{defin}

\begin{remark}\label{rmkreach}
For brevity we state a few facts
but omit the straightforward proofs.
A maximal $t$-switchable set $S$ is not empty.
For all $a \in N \setminus S$,
$S \cup \{a\}$ is not $t$-switchable.
For every $i \in \{1, \ldots, n\}$ and every $u\in [r_i]$
there exists $b\in S$ such that $b_i = u$.
\end{remark}

\begin{prop}\label{propreach}
Let $S$ be a maximal $t$-switchable set
in which all elements of $S$ are pairwise connected.
Then $S = N$.
\end{prop}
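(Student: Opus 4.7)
The plan is to argue by contradiction: suppose $S \subsetneq N$. I will construct a $t$-switchable set $T$ with $S \subsetneq T$ and $\CPT \subseteq \CPS$, which contradicts the definition of maximality (which demands that $\CPS$ and $\CPT$ be incomparable for every proper $t$-switchable superset $T$).

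The main obstacle is that by Remark~\ref{rmkreach} no single-element enlargement of $S$ is $t$-switchable, so $T$ must be built more cleverly. My approach uses the projection $\pi\colon N \to [r_{t+1}]\times\cdots\times[r_n]$ onto the last $n-t$ coordinates: for any $D \subseteq [r_{t+1}]\times\cdots\times[r_n]$, the preimage $\pi^{-1}(D) = [r_1]\times\cdots\times[r_t]\times D$ is automatically $t$-switchable, because a $t$-switch modifies only coordinates in $[t]$ and hence preserves $\pi$-values. If $\pi^{-1}(\pi(S)) \supsetneq S$, I take $T = \pi^{-1}(\pi(S))$. Otherwise $S = \pi^{-1}(\pi(S))$, and combining this with $S \subsetneq N$ forces $\pi(S) \subsetneq [r_{t+1}]\times\cdots\times[r_n]$; I then pick any $c^* \notin \pi(S)$ and set $T = \pi^{-1}(\pi(S)\cup\{c^*\})$. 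In either case $T$ is a proper $t$-switchable superset of $S$.

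To establish $\CPT \subseteq \CPS$, I handle the generators. A variable $x_a$ with $a \notin T$ satisfies $a \notin S$ (since $T\supseteq S$), so $x_a \in \CVar \subseteq \CPS$. For a binomial generator $f_{i,a,b} = x_ax_b - x_cx_d$ of $\CItT$ with $i \in [t]$, $a,b \in T$, $c = \switch(i,a,b)$, and $d = \switch(i,b,a)$, I use the sign identity $f_{i,c,d} = -f_{i,a,b}$, which follows from $\switch(i,c,d) = a$ and $\switch(i,d,c) = b$. Now case analysis finishes: if $a, b \in S$, the pairwise connectedness of $S$ puts $f_{i,a,b}$ directly into $\CItS \subseteq \CPS$; if $c, d \in S$, the same reasoning puts $f_{i,c,d}$ into $\CItS$, and the sign identity yields $f_{i,a,b} \in \CPS$; otherwise at least one of $a,b$ and at least one of $c,d$ lies outside $S$, which forces both monomials $x_ax_b$ and $x_cx_d$ into $\CVar \subseteq \CPS$, so $f_{i,a,b} \in \CPS$. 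This exhausts the cases and yields the desired contradiction, so $S = N$.
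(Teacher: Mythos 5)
Your proof is correct and follows essentially the same route as the paper: exhibit a proper $t$-switchable superset $T$ of $S$ with $\CPT \subseteq \CPS$ (via the same three-way case analysis on whether $a,b$ or their switches $\switch(i,a,b),\switch(i,b,a)$ lie in $S$) and contradict maximality. The projection construction of $T$ is an unnecessary detour, though: since $N$ itself is $t$-switchable and properly contains $S$ whenever $S \neq N$, the paper simply takes $T = N$, and your case analysis applies verbatim to show $\CPN \subseteq \CPS$.
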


\begin{proof}
Certainly $S \subseteq N$, and both $S$ and~$N$ are $t$-switchable.
We first prove that $\CPN \subseteq \CPS$.
Let $f_{i,a,b} \in \CPN$,
with $i \le t$.
If $a,b \in S$,
then using that $S$ is $t$-switchable and that all of its elements are
connected,
$f_{i,a,b} \in \CPS$.
So we may assume that either $a \not \in S$ or $b \not \in S$ and
similarly that either $\switch(i,a,b) \not \in S$ or $\switch(i,b,a) \not \in S$.
%(if both switches are in $S$,
%then since all elements of $S$ are connected and since $i \le t$,
%by Lemma~\ref{lemconnected}
%it follows that $a = \switch(i,\switch(i,a,b),\switch(i,b,a))$ is also in $S$,
%and similarly $b \in S$).
Hence $f_{i,a,b} \in \CVar \subseteq \CPS$.
Thus $\CPN \subseteq \CPS$,
and by maximality of $S$,
$S = N$.
\end{proof}

\begin{ex}\label{exnotmax}
The $1$-switchable set given in Example~\ref{exnotblock} is not
maximal as it is a single equivalence class, but is not all of $N$.
% (see Proposition~\ref{propreach}).
\end{ex}

\begin{lem}\label{lemReduction}
For $a_0, a_1, b \in N$ and $i \in [n]$,
$$
x_{a_1}f_{i,a_0,b} - x_{b}f_{i,a_0,a_1}
=
x_{\switch(i,a_0,a_1)}f_{i,\switch(i,a_1,a_0),b}
- x_{\switch(i,b,a_0)} f_{i,a_1,\switch(i,a_0,b)}.
$$
In particular,
if $a_0$ and $a_1$ differ only in the $l$th component and $l \not = i$,
then
$$
x_{a_1}f_{i,a_0,b} - x_{a_0}f_{i,a_1,b}
= - x_{\switch(i,b,a_0)} f_{i,a_1,\switch(i,a_0,b)}
\in (f_{i,a,b} : a, b \in N, d(a,b) = 2).
$$
More generally,
let $a_0, a_1, \ldots, a_k, b \in N$,
and assume that for all $j = 1, \ldots, l$,
$a_{j-1}$ and $a_j$ differ exactly in component $l_j \not = i$.
Then
$$
x_{a_1} x_{a_2} \cdots x_{a_k} f_{i,a_0,b}
-
x_{a_0} x_{a_1} \cdots x_{a_{k-1}} f_{i,a_k,b}
\in \sum_{j=1}^k (f_{i,a,b} : a, b \in N, d(a,b) = 2, a_{l_j} \not = b_{l_j}).
$$
In particular,
if $d(a_k,b) = 2$ and $a_k$ differ in components $i$ and $l_0$ for some $l_0$,
then
$$
x_{a_1} x_{a_2} \cdots x_{a_k} f_{i,a_0,b}
\in \sum_{j=0}^k (f_{i,a,b} : a, b \in N, d(a,b) = 2, a_{l_j} \not = b_{l_j}).
$$
\end{lem}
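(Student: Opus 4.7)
The plan is to verify the master identity by direct algebraic expansion, and then extract every remaining claim as either a specialization of it or a telescoping consequence.

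For the first identity, I would expand both sides using $f_{i,c,d} = x_c x_d - x_{\switch(i,c,d)} x_{\switch(i,d,c)}$ and simplify using the two idempotence-like rules $\switch(i,\switch(i,a,b),c) = \switch(i,a,c)$ and $\switch(i,c,\switch(i,a,b)) = \switch(i,c,b)$, both of which follow immediately from the definition (a second switch in component $i$ just overwrites the first). After these substitutions, the $x_{a_0}x_{a_1}x_b$ terms cancel on the left, two mixed cubic monomials cancel on the right, and both sides collapse to the common expression $x_b\,x_{\switch(i,a_0,a_1)}\,x_{\switch(i,a_1,a_0)} - x_{a_1}\,x_{\switch(i,a_0,b)}\,x_{\switch(i,b,a_0)}$.

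For the first ``in particular,'' I would use the hypothesis $(a_0)_i = (a_1)_i$ to observe that $\switch(i,a_0,a_1) = a_0$ and $\switch(i,a_1,a_0) = a_1$, whence $f_{i,a_0,a_1} = 0$; substituting into the master identity gives the displayed equation. To place the right-hand side into $(f_{i,a,b} : d(a,b) = 2)$, I would examine the pair $(a_1,\switch(i,a_0,b))$ componentwise: it agrees outside positions $l$ and $i$, differs in $l$ (since $a_0$ and $a_1$ do), and differs in $i$ exactly when $(a_0)_i \ne b_i$. Thus either $\switch(i,a_0,b) = a_0$ and $f_{i,a_1,\switch(i,a_0,b)}$ vanishes, or $d(a_1,\switch(i,a_0,b)) = 2$ with one differing coordinate equal to $i$, making $f_{i,a_1,\switch(i,a_0,b)}$ itself a generator.

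The general statement follows by telescoping. Setting $Q_s = \prod_{0 \le j \le k,\ j \ne s} x_{a_j}$, I would decompose
\[
Q_0 f_{i,a_0,b} - Q_k f_{i,a_k,b} = \sum_{s=0}^{k-1}\bigl(Q_s f_{i,a_s,b} - Q_{s+1} f_{i,a_{s+1},b}\bigr),
\]
and factor each summand as $\prod_{j \ne s, s+1} x_{a_j}\cdot\bigl(x_{a_{s+1}} f_{i,a_s,b} - x_{a_s} f_{i,a_{s+1},b}\bigr)$. Since $a_s$ and $a_{s+1}$ differ only in position $l_{s+1} \ne i$, the special case places this summand in $(f_{i,a,b} : d(a,b) = 2,\ a_{l_{s+1}} \ne b_{l_{s+1}})$, and summing gives the stated membership. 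For the last ``in particular,'' the hypothesis that $a_k$ and $b$ differ precisely in components $i$ and $l_0$ means that $x_{a_0}\cdots x_{a_{k-1}} f_{i,a_k,b}$ already lies in $(f_{i,a,b} : d(a,b) = 2,\ a_{l_0} \ne b_{l_0})$, so transferring it across the equation enlarges the index of summation from $1 \le j \le k$ to $0 \le j \le k$. The main obstacle is purely combinatorial bookkeeping of the switch function rather than any structural difficulty; the one point that needs a small case distinction is verifying that the degenerate case $(a_0)_i = b_i$ yields a vanishing binomial rather than a spurious generator.
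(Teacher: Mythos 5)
Your proof is correct and follows essentially the same route as the paper's: a direct expansion verifying the master identity (the paper likewise reduces it to the common cubic expression by moving one term across), the specialization $f_{i,a_0,a_1}=0$ with the same case distinction on whether $d(a_1,\switch(i,a_0,b))$ is $1$ or $2$, and an induction on $k$ that your telescoping sum makes explicit. No gaps.
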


\begin{proof}
The first statement is straightforward rewriting:
\begin{align*}
x_{a_1}f_{i,a_0,b} - &x_{b}f_{i,a_0,a_1} -
x_{\switch(i,a_0,a_1)}f_{i,\switch(i,a_1,a_0),b} \\
&=
- x_{a_1} x_{\switch(i,a_0,b)} x_{\switch(i,b,a_0)}
+ x_{\switch(i,a_0,a_1)} x_{\switch(i,a_1,a_0)} x_{b}\\
&\hskip4em
%- x_{\switch(i,a_0,a_1)}f_{i,\switch(i,a_1,a_0,b)}\\
- x_{\switch(i,a_0,a_1)}x_{b} x_{\switch(i,a_1,a_0)}
+ x_{\switch(i,a_0,a_1)}x_{\switch(i,b,\switch(i,a_1,a_0))}
x_{\switch(i,\switch(i,a_1,a_0),b)}\\
&=  x_{\switch(i,a_0,a_1)}
x_{\switch(i,b,\switch(i,a_1,a_0))} x_{\switch(i,\switch(i,a_1,a_0),b)}
- x_{a_1} x_{\switch(i,a_0,b)} x_{\switch(i,b,a_0)}\\
%\displaybreak %HERE, may need to change that depending on formatting
&= x_{\switch(i,a_0,a_1)} x_{\switch(i,b,a_0)} x_{\switch(i,a_1,b)}
- x_{a_1} x_{\switch(i,a_0,b)} x_{\switch(i,b,a_0)}\\
&= (x_{\switch(i,a_0,a_1)} x_{\switch(i,a_1,b)}
- x_{a_1} x_{\switch(i,a_0,b)}) x_{\switch(i,b,a_0)} \\
&= x_{\switch(i,b,a_0)}(-f_{i,a_1,\switch(i,a_0,b)}).
\end{align*}
If $d(a_0,a_1) = 1$,
then $f_{i,a_0,a_1} = 0$ for all $i$,
and $a_1$ and $\switch(i,a_0,b)$ differ at most in the two components
$i$ and $l$,
so that $f_{i,a_1,\switch(i,a_0,b)} \in
(f_{i,a,b} : a, b \in N, d(a,b) = 2)$.

The rest is an easy induction on $k$,
with the previous part being the base case.
%starting with Lemma~\ref{lemInductCompute} when $k = 1$.
\end{proof}

\begin{thm}\label{thmCIminprimes}
If $P$ is a prime ideal minimal over $\CI$,
then $P = \CPS$ for some maximal $t$-switchable set $S$.
\end{thm}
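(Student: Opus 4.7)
The plan is to associate to $P$ the set $S = \{a \in N : x_a \notin P\}$ and verify three things: $S$ is $t$-switchable, $P = \CPS$, and $S$ is maximal $t$-switchable.

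For the first claim, I would verify Definition~\ref{defadmcont} directly. Suppose $a, b \in S$ with $d(a,b) = 2$ and $i \in [t]$. If $a_i = b_i$, then $\switch(i,a,b) = a \in S$ trivially. Otherwise $a$ and $b$ differ in component $i$, and the slice minor $f_{i,a,b} = x_a x_b - x_{\switch(i,a,b)} x_{\switch(i,b,a)}$ is a generator of $\CI \subseteq P$; since $x_a x_b \notin P$ by primality, also $x_{\switch(i,a,b)} x_{\switch(i,b,a)} \notin P$, and primality forces both $\switch(i,a,b)$ and $\switch(i,b,a)$ into $S$.

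For the second claim, Proposition~\ref{propQscontainsCI} and Theorem~\ref{thmCItSisprime} give $\CI \subseteq \CPS$ with $\CPS$ prime, so once $\CPS \subseteq P$ is established, minimality of $P$ over $\CI$ yields $P = \CPS$. The variable part $\CVar = (x_a : a \notin S)$ lies in $P$ by the definition of $S$. For the binomial part, Remark~\ref{rmkCIbinommin} gives $\CItS = \CIS : \mathbf{x}_S$, where $\CIS$ is generated by distance-two slice minors $f_{i,a,b}$ with $i \in [t]$; these are generators of $\CI$, so $\CIS \subseteq P$, and since $\mathbf{x}_S = \prod_{a \in S} x_a \notin P$ by primality, $\CItS \subseteq P$ as needed.

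For the third claim, let $T \supsetneq S$ be $t$-switchable and choose $a \in T \setminus S$, so $x_a \in \CVar \subseteq \CPS$. The ideal $\CPT = \CVarT + \CItT$ is homogeneous: $\CVarT$ is generated in degree~$1$ by the variables $x_b$ with $b \notin T$, while $\CItT$ is generated in degree~$2$ by binomials, so the degree-$1$ piece of $\CPT$ equals the span of $\{x_b : b \notin T\}$; since $a \in T$, this forces $x_a \notin \CPT$, and hence $\CPS \not\subseteq \CPT$. Conversely, if $\CPT \subseteq \CPS = P$, then $\CPT$ is a prime ideal containing $\CI$ (Proposition~\ref{propQscontainsCI}) contained in $P$, which by minimality would give $\CPT = P$, contradicting $x_a \in P \setminus \CPT$. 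Thus $\CPS$ and $\CPT$ are incomparable, so $S$ is maximal $t$-switchable. The step I expect to need the most care is this last one, since the incomparability rests on the homogeneity of $\CPT$ to rule out $x_a \in \CPT$ for $a \in T$; the first two claims follow almost mechanically from the tools already set up in Sections~\ref{setup} and~\ref{sectadm}.
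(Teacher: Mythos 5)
Your proof is correct, and its skeleton is the same as the paper's: take $S=\{a\in N: x_a\notin P\}$, use primality of $P$ against the generators $f_{i,a,b}$ to get $t$-switchability, sandwich $\CI\subseteq\CPS\subseteq P$ and invoke minimality, then check incomparability against larger $t$-switchable $T$. The one place you genuinely diverge is the containment $\CItS\subseteq P$: the paper proves this by an explicit rewriting argument, choosing a path from $a$ to $b$ inside $S$ via Lemma~\ref{lemconnected} and then clearing the path variables with the identity in Lemma~\ref{lemReduction}, whereas you invoke the saturation statement $\CItS=\CIS:\mathbf{x}_S$ from Remark~\ref{rmkCIbinommin} together with $\CIS\subseteq\CI\subseteq P$ and $\mathbf{x}_S\notin P$. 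Your route is shorter and is in fact the same mechanism the paper itself uses later in Corollary~\ref{corITpart}; its cost is that it leans on the Segre/lattice-ideal identification behind that remark rather than on the elementary syzygy computation, so it is less self-contained but logically unobjectionable (the remark precedes the theorem and does not depend on it). You also make explicit, via the degree-one graded piece of $\CPT$, why $x_a\notin\CPT$ for $a\in T$ — a point the paper compresses into the assertion that $\CVarT\subsetneq\CVar$ forces $\CPS\not\subseteq\CPT$ — which is a worthwhile clarification. (The paper additionally notes up front that $S\neq\emptyset$; your argument does not need this as a separate step, since your incomparability check would itself rule out $S=\emptyset$.)
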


\begin{proof}
We first note that $\CIt = \CPN$,
and this is a prime ideal by Theorem~\ref{thmCItSisprime}.

Now let $P$ be an arbitrary prime ideal minimal over $\CI$.
Let $S$ be the set of all $a \in N$
such that $x_a \not \in P$.
We know that $S$ is not empty,
for otherwise $P$ is the ideal generated by all the variables,
which properly contains the already established minimal prime ideal $\CIt$.

Let $a, b \in S$ have $d(a,b) = 2$ and $a_i\neq b_i$ for some $i\in[t]$.
Since $P$ contains $\CI$ and $i\in[t]$, $P$ contains $f_{i,a,b}  = x_a
x_b - x_{\switch(i,b,a)} x_{\switch(i,a,b)}$.
Since $a, b \in S$,
then $x_a x_b \not \in P$,
so that necessarily
$x_{\switch(i,b,a)} x_{\switch(i,a,b)} \not \in P$,
and hence
$\switch(i,b,a), \switch(i,a,b) \in S$.
This proves that $S$ is $t$-switchable,
and so by Proposition~\ref{propQscontainsCI},
$\CI \subseteq \CPS$,
and by Theorem \ref{thmCItSisprime},
$\CPS$ is a prime ideal.

We next prove that $\CPS \subseteq~P$.
By the construction of $S$, $\CVar \subseteq P$.
Let $f_{i,a,b} \in \CItS$,
with $i\in[t]$ and $a$ and $b$ connected in $S$.
By the definition of connectedness,
there exist elements $a_0 = a, a_1, \ldots, a_k \in S$
such that for all $j = 1, \ldots, k$,
$a_{j-1}$ and $a_j$ differ only in one component,
and $d(a_k,b) = 2$.
By Lemma~\ref{lemconnected},
we may choose such a path so that $a_k$ and $b$ differ in the $i$th component.
Then by Lemma~\ref{lemReduction},
$x_{a_1} \cdots x_{a_k} f_{i,a,b} \in \CItS \subseteq P$,
and since $x_{a_j} \not \in P$,
it follows that $f_{i,a,b} \in P$,
as desired.
Thus $\CI \subseteq \CPS \subseteq P$.  Since
$\CPS$ is a prime ideal, by minimality of $P$, $\CPS = P$.

Finally,
let $T$ be $t$-switchable and properly containining $S$.
Then $\CVarT \subsetneq \CVar$,
so that $\CPS \not \subseteq \CPT$.
By Proposition~\ref{propQscontainsCI},
$\CPT$ contains $\CI$,
by Theorem~\ref{thmIStisprime},
$\CPT$ is a prime ideal,
and this combined with the fact that $P = \CPS$ is minimal over $\CI$,
implies that $\CPT \not \subseteq \CPS$.
%$\CItT\not\subseteq \CPS$.
Therefore
$\CPS$ and $\CPT$ are incomparable.
Thus $S$ is a maximal $t$-switchable set.
\end{proof}

\begin{thm}\label{thmCmaxadmiprime}
Let $S$ be a maximal $t$-switchable set.
Then $\CPS$ is a minimal associated prime ideal of $\CI$.
\end{thm}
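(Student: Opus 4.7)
The plan is to show that $\CPS$ is minimal among prime ideals containing $\CI$, which is exactly what it means to be a minimal associated prime. The setup is already largely in place: $\CPS$ is prime by Theorem~\ref{thmCItSisprime} and contains $\CI$ by Proposition~\ref{propQscontainsCI}, and Theorem~\ref{thmCIminprimes} classifies every minimal prime of $\CI$ as $\CPT$ for some maximal $t$-switchable $T$. So the real task is to rule out a strict containment $\CPT\subsetneq\CPS$.

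First I would pick any minimal prime $Q$ of $\CI$ with $Q\subseteq\CPS$; such a $Q$ exists because $\CPS$ itself is a prime containing $\CI$. By Theorem~\ref{thmCIminprimes}, $Q=\CPT$ for some maximal $t$-switchable set $T$, and the goal becomes to force $T=S$, after which $Q=\CPS$ gives minimality directly.

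The key step is to compare the variable parts of $\CPT$ and $\CPS$. I would argue that the set of variables contained in $\CPS$ is exactly $\{x_a : a\notin S\}$. For this, first note that by Lemma~\ref{lemconnected} (applied with $L=\{i\}$) every generator $f_{i,a,b}$ of $\CItS$ involves only variables indexed by elements of $S$, so $\CItS$ lives inside the subring generated by $\{x_a:a\in S\}$. Second, $\CItS$ is a prime lattice ideal for a saturated lattice — equivalently, the defining ideal of a Segre embedding by Theorem~\ref{thmIStisprime} and Discussion~\ref{discSegre} — and therefore contains no variable. Combining these two facts, $\CPS=\CVar+\CItS$ meets the set of variables precisely in $\CVar$. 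Applying this to $\CPT\subseteq\CPS$: for each $a\notin T$ we have $x_a\in\CPT\subseteq\CPS$, hence $x_a\in\CVar$, i.e.\ $a\notin S$. Thus $S\subseteq T$.

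Finally I would invoke maximality of $S$ via Definition~\ref{defmaxtadm}: if $S\subsetneq T$, then $\CPS$ and $\CPT$ must be incomparable, contradicting $\CPT\subseteq\CPS$. Hence $T=S$, so $\CPS=\CPT=Q$ is a minimal prime over $\CI$. The step I expect to be the most delicate is the identification of the variables lying in $\CPS$; this is where the structural content of Theorem~\ref{thmIStisprime} (that $\CItS$ is a toric/Segre ideal, hence variable-free) is essential. Once that is in hand, the rest is a short chain of inclusions feeding directly into the definition of maximal $t$-switchable.
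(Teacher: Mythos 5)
Your proof is correct and follows essentially the same route as the paper's: take a minimal prime $Q\subseteq\CPS$, identify it as $\CPT$ via Theorem~\ref{thmCIminprimes}, deduce $S\subseteq T$ from $\CVarT\subseteq\CVar$, and conclude $S=T$ from maximality. The only difference is that you spell out why $\CPT\subseteq\CPS$ forces $\CVarT\subseteq\CVar$ (namely that $\CItS$ contains no variables), a step the paper asserts with ``necessarily.''
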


\begin{proof}
By Proposition~\ref{propQscontainsCI},
$\CPS$ is a prime ideal containing $\CI$.
Let $P$ be a prime ideal contained in $\CPS$ and minimal over $\CI$.
By Theorem~\ref{thmCIminprimes},
$P = \CPT$ for some maximal $t$-switchable set $T$.
Since $\CPT \subseteq \CPS$,
necessarily $\CVarT \subseteq \CVar$,
so that $S \subseteq T$.
But then comparability of $\CPS$ and $\CPT$ and maximality of $S$ force $S = T$.
\end{proof}

The last two results prove:

\begin{thm}\label{thmminprimescont}
The set of prime ideals minimal over~$\CI$
equals the set of ideals of the form~$\CPS$
as $S$ varies over maximal $t$-switchable sets.
\qed
\end{thm}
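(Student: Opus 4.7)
The plan is to prove the equality by double inclusion, and fortunately both inclusions have already been set up in the two immediately preceding theorems; the work of this final statement is essentially to observe that together they give the claimed equality.

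For the inclusion that every prime ideal minimal over $\CI$ has the form $\CPS$ for some maximal $t$-switchable $S$, I would simply cite Theorem~\ref{thmCIminprimes}. The proof there was the substantive direction: starting from a minimal prime $P$, one defines $S = \{a \in N : x_a \notin P\}$, verifies from the binomial generators $f_{i,a,b}$ of $\CI$ and the primality of $P$ that $S$ is $t$-switchable, then uses Lemma~\ref{lemReduction} together with the connectedness description from Lemma~\ref{lemconnected} to deduce $\CPS \subseteq P$, whence $P = \CPS$ by minimality of $P$ and primality of $\CPS$ (Theorem~\ref{thmCItSisprime}). Maximality of $S$ follows since a strictly larger $t$-switchable set $T$ would yield $\CPT$ prime and containing $\CI$ but incomparable with $P = \CPS$.

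For the reverse inclusion, that $\CPS$ is a minimal prime over $\CI$ whenever $S$ is maximal $t$-switchable, I would cite Theorem~\ref{thmCmaxadmiprime}. That proof picks a prime $P \subseteq \CPS$ minimal over $\CI$, applies the first direction to write $P = \CPT$ for some maximal $t$-switchable $T$, observes that $\CPT \subseteq \CPS$ forces $\CVarT \subseteq \CVar$ and hence $S \subseteq T$, and then uses maximality of $S$ to conclude $S = T$, so $P = \CPS$.

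Combining these gives exactly the stated set equality, so the proof is essentially a one-line appeal. The only obstacle would be if the preceding two theorems had not already handled maximality and incomparability carefully; but they do, so no further argument is needed beyond invoking them and closing with~$\qed$.
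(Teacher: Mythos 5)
Your proposal is correct and matches the paper exactly: the paper introduces this theorem with ``The last two results prove:'' and supplies only the \qed, so the intended proof is precisely the combination of Theorem~\ref{thmCIminprimes} and Theorem~\ref{thmCmaxadmiprime} that you describe.
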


\begin{cor}\label{corITpart}
%Let $\CPS$ be a prime ideal minimal over $\CI$ for some $t$-switchable set~$S$.
Let $S$ be a $t$-switchable set (maximal or not)
such that $\CPS$ is associated to~$\CI$.
Then $\CItS$ is contained in the $\CPS$-primary component of $\CI$.
\end{cor}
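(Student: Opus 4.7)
The plan is to argue by primariness, following the multiplier construction that already appears inside the proof of Theorem~\ref{thmCIminprimes}. Fix any primary decomposition of $\CI$ in which $\CPS$ appears as an associated prime, and let $Q$ denote a corresponding $\CPS$-primary component. To show $\CItS \subseteq Q$ it suffices, since $\CItS$ is generated by the elements $f_{i,a,b}$ with $i \in [t]$ and $a,b$ connected in $S$, to produce for each such generator an element $g \in R \setminus \CPS$ with $g \cdot f_{i,a,b} \in \CI$: then $g f_{i,a,b} \in \CI \subseteq Q$ together with $g \notin \sqrt{Q} = \CPS$ forces $f_{i,a,b} \in Q$ by $\CPS$-primariness of $Q$.

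To manufacture $g$, I reuse the path argument from Theorem~\ref{thmCIminprimes}. Since $a$ and $b$ are connected in the $t$-switchable set $S$, Lemma~\ref{lemconnected} supplies a path $a = a_0, a_1, \ldots, a_k$ inside $S$ whose consecutive steps each change exactly one component (none equal to $i$, by rerouting through switches if necessary), and whose final endpoint $a_k$ differs from $b$ in exactly the two coordinates $i$ and some $l_0$. Lemma~\ref{lemReduction} then places $x_{a_1} x_{a_2} \cdots x_{a_k} \cdot f_{i,a,b}$ inside a sum of $2 \times 2$ slice minors each having $i \in [t]$ among its two varying coordinates, hence inside $\CI$. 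Every $a_j$ lies in $S$, so $x_{a_j} \notin \CPS$, and therefore $g := x_{a_1} \cdots x_{a_k} \in R \setminus \CPS$ is the desired multiplier.

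The only delicate point is the route selection demanded by the hypotheses of Lemma~\ref{lemReduction}: the intermediate steps of the path must avoid component $i$, while the final step (from $a_k$ to $b$) must toggle it. This is the same maneuver that is implicit in the proof of Theorem~\ref{thmCIminprimes}, and it is effected by iterated applications of Lemma~\ref{lemconnected} inside the $t$-switchable set $S$. Hence no technical step beyond those already established in Section~\ref{sectminprimescont} is required; the argument is a direct primariness closure of the multiplier identity, working uniformly whether $S$ is maximal (so that $\CPS$ is a minimal prime and $Q$ is unique) or non-maximal (so that $\CPS$ is embedded and $Q$ is any choice of $\CPS$-primary component in any primary decomposition of $\CI$).
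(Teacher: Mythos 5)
Your argument is correct, and its logical skeleton is the same as the paper's: produce a monomial multiplier $g$ supported on $S$ (hence $g\notin\CPS$) with $g\cdot f_{i,a,b}\in\CI$, then conclude $f_{i,a,b}\in Q$ by $\CPS$-primariness of the component $Q$. The difference is where the multiplier identity comes from. The paper takes $g=\prod_{c\in S}x_c$ and cites Remark~\ref{rmkCIbinommin} (i.e.\ the saturation fact $\CItS=\CIS:\mathbf{x}_S$, which rests on the Segre/lattice-ideal interpretation in Discussion~\ref{discSegre}), so its proof is two lines. You instead rederive the membership $x_{a_1}\cdots x_{a_k}f_{i,a,b}\in\CI$ from scratch via the explicit rewriting identities of Lemma~\ref{lemReduction} along a path supplied by Lemma~\ref{lemconnected}, exactly as in the proof of Theorem~\ref{thmCIminprimes}. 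That route is more elementary and self-contained (no appeal to the saturation statement), at the cost of repeating the one genuinely delicate point of that earlier proof: the path must be rerouted so that intermediate steps avoid component $i$ while the terminal pair $a_k,b$ either coincides, differs only in component $i$ (so $f_{i,a_k,b}=0$), or differs in $i$ and one other coordinate. This can be done, e.g.\ by replacing each vertex $c_j$ of an arbitrary path by $\switch(i,c_j,a)$, which stays in $S$ by Lemma~\ref{lemconnected}; you gesture at this rather than spell it out, but that is the same level of detail the paper itself uses in Theorem~\ref{thmCIminprimes}, so there is no gap beyond what the source text already accepts.
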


\begin{proof}
Let $f_{i,a,b} \in \CItS$.
So $a, b \in S$ are connected and $i \in [t]$.
%Let $Q$ be the $\CPS$-primary component of $\CI$.
By Remark~\ref{rmkCIbinommin},
%$\mathbf{x}f_{iab} \in \CI\subset Q \subset \CPS$,
%where $\mathbf{x} =\prod_{a\in S} x_a$.
$\left(\prod_{c\in S} x_c\right) f_{iab} \in \CI$.
By construction,
$\prod_{c\in S} x_c^m\notin \CPS$ for any $m$
and hence $f_{iab}$ is in the $\CPS$-primary component of $\CI$.
\end{proof}

We note that Corollary~\ref{corITpart}
holds for binomial ideals in characteristic $0$ in general
by Eisenbud--Sturmfels~\cite[Theorem 7.1']{ESbinom}.
Our results for the specific binomial ideals $\CI$
are independent of the characteristic.

In the rest of the section we present some atypical behavior for $t = 1$.
The next lemma helps connect maximal $1$-switchable sets to the
admissible bipartite graphs in~\cite{Fink}. 

\begin{lem}\label{lemmaFcolonx}
Let $n \ge 3$,
and let $o, o' \in N$ differ at most in the first component.
Then $\CIo : x_o = \CIo : x_{o'}$.
\end{lem}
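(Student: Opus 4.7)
By the symmetry between $o$ and $o'$, it suffices to show $\CIo : x_o \subseteq \CIo : x_{o'}$. The plan is to reduce this to a combinatorial statement about maximal $1$-switchable sets, using the radicality of $\CIo$ together with the minimal prime description of Theorem~\ref{thmminprimescont}.

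First I would invoke Ay--Rauh~\cite{AR} to write $\CIo = \bigcap_S \CPS$, with $S$ ranging over the maximal $1$-switchable subsets of $N$. Because $\CItS$ is a prime ideal containing no variables (Theorem~\ref{thmCItSisprime} with Remark~\ref{rmkCIbinommin}), $x_a \in \CPS$ if and only if $a \notin S$. The standard colon-with-primes computation then yields
\[
\CIo : x_o = \bigcap_{S \ni o} \CPS \qquad\text{and}\qquad \CIo : x_{o'} = \bigcap_{S \ni o'} \CPS,
\]
so the lemma reduces to the combinatorial claim that for every maximal $1$-switchable $S$, $o \in S$ if and only if $o' \in S$.

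To prove this claim I would show that every maximal $1$-switchable set has the product form $S = [r_1] \times V$ for some $V \subseteq N' := [r_2] \times \cdots \times [r_n]$. Set $R_v := \{i \in [r_1] : (i,v) \in S\}$ for $v \in N'$. Applying $1$-switchability to pairs $(i,v_1), (j, v_2) \in S$ at Hamming distance $2$ (so $d(v_1, v_2) = 1$ in $N'$ and $i \ne j$) yields the dichotomy: $R_{v_1} = R_{v_2}$, or at least one of the two is empty. Consequently, $R_v$ is a constant set $R_C$ on each connected component $C$ of $V := \{v : R_v \ne \emptyset\}$ in the Hamming-$1$ graph on $N'$.

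Next, setting $T := [r_1] \times V$, the dichotomy makes $T$ a $1$-switchable set with $T \supseteq S$. The main step will be to verify $\CPT \subseteq \CPS$. For any generator $f_{1,a,b}$ of $\CItT$, the indices $a, b$ lie in a single component $[r_1] \times C$ of $T$. If $a \in T \setminus S$, then $a_1 \notin R_C = R_{v_b}$, so $(a_1, v_b) \notin S$ as well, placing both monomials of $f_{1,a,b}$ in $\CVar \subseteq \CPS$; if instead $a, b \in S$, then both lie in the single connected component $R_C \times C$ of $S$, so $f_{1,a,b} \in \CItS \subseteq \CPS$. Combined with $\CVarT \subseteq \CVar$, this gives $\CPT \subseteq \CPS$, strictly whenever $S \subsetneq T$, which contradicts the minimality of $\CPS$ over $\CIo$. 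Thus $S = T = [r_1] \times V$, and $o \in S$ iff $(o_2, \ldots, o_n) \in V$ iff $o' \in S$. The hard part will be the case analysis establishing $\CPT \subseteq \CPS$: in particular, using the constancy of $R_v$ on components of $V$ to show that the switched index $(a_1, v_b)$ stays outside $S$ whenever $a \in T \setminus S$.
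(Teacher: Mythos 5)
Your proposal is correct, but it takes a genuinely different route from the paper. The paper proves the lemma directly and self-containedly: it writes membership of $x_o g$ in $\CIo$ as a sequence of transpositions swapping first components among the indices, and argues by induction on the length of that sequence that the same (or a modified) sequence witnesses $x_{o'}g\in\CIo$; no radicality or minimal-prime information is used. You instead import the Ay--Rauh result that $\CIo$ is radical, combine it with Theorem~\ref{thmminprimescont} to get $\CIo=\bigcap_S\CPS$, and reduce the lemma to the claim that every maximal $1$-switchable $S$ has the form $[r_1]\times V$ --- which is exactly the paper's Lemma~\ref{lemmareach1}, except that the paper \emph{derives} that lemma from the statement you are proving, so your argument inverts the paper's logical order. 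Your direct proof of the product form (the dichotomy $R_{v_1}=R_{v_2}$ or one empty, constancy on Hamming components, and the verification that $T=[r_1]\times V$ is $1$-switchable with $\CPT\subsetneq\CPS$) is sound and is essentially the block-form argument of Remark~\ref{rmk4Fink} generalized from $n=3$ to all $n$; I checked the key step that for $a\in T\setminus S$ both terms of $f_{1,a,b}$ land in $\CVar$, which works because $a_1\notin R_C$ forces $\switch(1,b,a)\notin S$. What each approach buys: yours is shorter modulo the external input and yields Lemma~\ref{lemmareach1} and the general-$n$ block form as free byproducts; the paper's is independent of~\cite{AR}, which matters because the authors note they originally used this lemma (via $\CIo:x_ox_{o'}=\CIo:x_o$) to prove radicality of $\CIo$ --- for that purpose your route would be circular, but as a proof of the lemma as stated in the current paper, which already cites~\cite{AR} for radicality, it is legitimate.
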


\begin{proof}
By symmetry it suffices to prove that $\CIo : x_o \subseteq \CIo : x_{o'}$.
Since
$x_o (\CIo : x_o) = \CIo \cap (x_o) = (\CIo \cdot Y + (x_o (Y-1)))R[Y] \cap R$
for a variable $Y$,
it follows that $\CIo : x_o$
is generated by binomials of the form
$g = x_{a_1} \cdots x_{a_l} - x_{b_1} \cdots x_{b_l}$.
Note that $x_o g$ being in $\CIo$
is the same as saying that there exists a sequential rewriting of
$o, a_1, \ldots, a_r$
into $o, b_1, \ldots, b_r$ (after possibly reindexing $b_1, \ldots, b_r$),
such that at each step,
only two indices change by switching their first components.
This means that for each $i = 1, \ldots, r$,
$a_i$ and $b_i$ differ at most in the first component.
Thinking of $o$ as being in the $0$th place on the original list
$o, a_1, \ldots, a_r$,
and $a_i$ in the $i$th place, we record each step 
in the rewriting process as a transposition $(i,j)$ when we switch the
first components of the $i$th and $j$th indices on the list.

Let $w$ be the composition of all these transpositions.
If $\CIo : x_o \not \subseteq \CIo : x_{o'}$,
we may choose such a $g \in (\CIo : x_o) \setminus (\CIo : x_{o'})$
for which the composition $w$ takes the fewest number of transpositions
for all such choices of $o$, $o'$ and $g$.

%If $0$ is not in any transposition in the reduction,
If while undergoing the transpositions,
the first entry of $x_o$ in the initial $x_o x_{a_1} \cdots x_{a_r}$
moves back into the first entry of $x_o$
in the final $x_o x_{b_1} \cdots x_{b_r}$,
then similarly
the first entry of $x_{o'}$ in the initial $x_{o'} x_{a_1} \cdots x_{a_r}$
moves back into the first entry of $x_{o'}$
in the final $x_{o'} x_{b_1} \cdots x_{b_r}$,
so that $g \in \CIo : x_{o'}$,
as desired.
In particular, this is the case if $a_{i1} \not = o_1$ for all~$i$
or if $o_1$ is not moved by any transposition.
If some $i > 0$ is never used in any transposition,
then $a_i = b_i$,
and hence the reduction steps give $g/x_{a_i} \in \CIo : x_o$, which
has strictly smaller~degree. 
%and one can replace $m$ by $m/x_{a_i}$.
So we may assume that each $i = 0, \ldots, r$ is invoked by some transposition,
and thus by minimality there are at least two reduction steps.
If the last transposition in $w$ does not include $0$,
then all but the last transposition of $w$ takes 
$o, a_1, \ldots, a_r$
to $o, c_1, \ldots c_r$ for some $c_1, \ldots, c_r$.
Then by the first statement in this paragraph
$x_{b_1} \cdots x_{b_r} - x_{c_1} \cdots x_{c_r} \in \CIo : x_{o'}$,
%such that $x_{b_1} \cdots x_{b_r} - x_{c_1} \cdots x_{c_r} \in \CIo$.
and by induction on the number of steps in $w$,
$x_{a_1} \cdots x_{a_r} - x_{c_1} \cdots x_{c_r} \in \CIo : x_{o'}$.
It follows that
$x_{a_1} \cdots x_{a_r} - x_{b_1} \cdots x_{b_r} \in \CIo : x_{o'}$.
Thus we may assume that the last transposition in $w$ includes $0$.

With this set-up,
we now perform these same successive (transposition) steps of $w$
on $o',a_1, \ldots, a_r$.
If we reduce in this way to $o', b_1, \ldots, b_r$,
we have that $g \in\CIo : x_{o'}$
as desired.
Considering that $w$ takes $o, a_1, \ldots, a_r$
to $o, b_1, \ldots, b_r$,
if $o', a_1, \ldots, a_r$ does not become $o', b_1, \ldots, b_r$,
$w$ must take it to 
$o, b_1, \ldots, b_{k-1}, 
\switch(1,b_k,o'), b_{k+1}, \ldots b_r$
for some $k \in \{1, \ldots, r\}$ where $b_{k1} = o_1$ and $k$ is such
that in the transpostions involving $o$,  $o_1$ lands in the first entry of $b_k$.

Let $z$ be the composition of precisely those first consecutive transpositions
%starting with the first step,
%in the reduction,
by which the first component $o_1$ of $o$
%, switches components
%until it
arrives in its final place in the $k$th position.
Since the last transposition in $w$ must involve $0$,
and since $b_k$ has the same first component as $o$,
by minimality the last transposition does not involve $k$.
Therefore $z \neq w$.
Note that $z^{-1} \circ w$ takes
$o, a_1, \ldots, a_r$ to $o, c_1, \ldots, c_r$,
whereby the first component $o_1$ of $o$
in $o, a_1, \ldots, a_r$
remains in the $0$th position in $o, c_1, \ldots, c_r$.
Therefore
$z^{-1} \circ w$ takes
$o', a_1, \ldots, a_r$ to $o', c_1, \ldots, c_r$,
so that
$x_{a_1} \cdots x_{a_r} - x_{c_1} \cdots x_{c_r} \in \CIo : x_{o'}$.
Furthermore,
$z$ takes $o, c_1, \ldots, c_r$ to $o, b_1, \cdots, b_r$
and since $z$ has strictly fewer number of steps than $w$,
$x_{c_1} \cdots x_{c_r} - x_{b_1} \cdots x_{b_r} \in \CIo : x_{o'}$ by~induction.
Therefore
$x_{a_1} \cdots x_{a_r} - x_{b_1} \cdots x_{b_r} \in \CIo : x_{o'}$,
which proves that $\CIo : x_o = \CIo : x_{o'}$.
\end{proof}

In an earlier version of our paper
we included a similar (but more complicated) proof
that $\CIo : x_o x_{o'} = \CIo : x_o$,
which implies that $\CIo$ is radical.
%Fink~\cite{Fink} proved that $\CIo$ is radical when $n = 3$, and
For brevity we omit our complicated proof
because recently Ay and Rauh~\cite{AR} proved, in a more straightforward way,
that $\CIo$ is radical for all $n$
via the square-free nature of the leading terms of a Gr\"obner basis.

\begin{lem}\label{lemmareach1}
Let $S$ be a maximal $1$-switchable set.
Then for all $a \in S$ and all $b \in N$,
$\switch(1,a,b) \in S$.
In other words,
any element in $N$ that differs from an element in $S$
in at most the first component
is also in $S$.
\end{lem}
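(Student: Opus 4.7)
The plan is to combine the colon identity of Lemma~\ref{lemmaFcolonx} with the radicality of $\CIo$ (established by Ay and Rauh~\cite{AR}, as noted in the discussion above) and the classification of minimal primes from Theorem~\ref{thmminprimescont}. Fix $a\in S$ and $b\in N$, and set $a'=\switch(1,a,b)$; the goal is to prove $a'\in S$.

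First I will dispose of the cases $n\le 2$ directly: when $n=1$, $\CIo=0$ and the only maximal $1$-switchable set is $N$; when $n=2$, $\CIo$ is the prime Segre ideal of a generic $r_1\times r_2$ matrix, so again $S=N$ is forced. In either case the conclusion is automatic. Assume therefore $n\ge 3$ and $a\ne a'$, so that $a$ and $a'$ differ exactly in their first components, and Lemma~\ref{lemmaFcolonx} applies to give
\[
\CIo : x_a \;=\; \CIo : x_{a'}.
\]

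Next I will expand both sides using primary decomposition. By radicality of $\CIo$ together with Theorem~\ref{thmminprimescont}, one has $\CIo = \bigcap_T \CPT$ where $T$ ranges over the (pairwise incomparable) maximal $1$-switchable subsets of $N$. For each prime $\CPT$ we have $\CPT : x_a = \CPT$ when $a\in T$ and $\CPT : x_a = R$ otherwise, so colon distributes through the intersection to yield
\[
\CIo : x_a \;=\; \bigcap_{T\ni a}\CPT \qquad\text{and}\qquad \CIo : x_{a'} \;=\; \bigcap_{T\ni a'}\CPT.
\]
Comparing the minimal primes of these two equal ideals --- which are precisely the primes appearing in each intersection, by their pairwise incomparability --- will give $\{T : a\in T\} = \{T : a'\in T\}$, using also that $T$ is recoverable from $\CPT$ as $\{c\in N : x_c\notin \CPT\}$. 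Since $S$ belongs to the left-hand set, it belongs to the right, and so $a'\in S$.

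The step that I expect to warrant the most care is the extraction of the set equality from the equality of the two intersections; this rests on the standard fact that the minimal primes of a finite intersection of pairwise incomparable primes are exactly those primes themselves. I do not anticipate a serious obstacle, as the substantive work has already been done in Lemma~\ref{lemmaFcolonx} and in the cited radicality of~$\CIo$.
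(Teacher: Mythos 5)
Your argument is correct and is essentially the paper's: both proofs rest on Lemma~\ref{lemmaFcolonx} together with the observation that the colon ideal $\CIo : x_a$ records exactly which minimal primes $\CPT$ avoid $x_a$, the paper phrasing this via the saturation $\CIo : x_a^{\infty} \subseteq \CPS$ while you phrase it via the explicit radical decomposition $\CIo = \bigcap_T \CPT$ and a comparison of minimal primes. Your rendering is in fact slightly cleaner, since it sidesteps the paper's unexplained jump from $\CIo : x_a = \CIo : x_c$ to $\CIo : x_a^m = \CIo : x_c^m$ for all $m$, at the cost of invoking radicality and Theorem~\ref{thmminprimescont} explicitly (and quietly using that $x_c \notin \CPT$ for $c \in T$, which holds because $\CItT$ contains no variables).
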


\begin{proof}
Let $a \in S$ and $b \in N$.
Then $c = \switch(1,a,b)$ differs from $a$ in at most the first component.
By Lemma~\ref{lemmaFcolonx},
$\CIo : x_a = \CIo : x_c$.
Since $a \in S$,
it follows that $x_a \not \in \CPS$,
so that for all positive integers $m$,
$\CIo : x_c^m = \CIo : x_a^m$ is contained in $\CPS$.
Hence $\CIo :
x_c^{\infty} = \CIo : x_a^{\infty}\subseteq \CPS$ and therefore, $x_c
\not \in \CPS$. Hence $c \in S$.
\end{proof}

\begin{remark}\label{rmk4Fink}
(Connection with admissible graphs in Fink~\cite{Fink}.)
Let $S$ be a maximal $1$-switchable
  set.  Lemma~\ref{lemmareach1} shows that
the first component is unrestricted in each equivalence class
determined by connectedness. Let $S = S_1\cup\cdots\cup S_l$ be a
partition of $S$ into equivalence classes.
We prove in the two paragraphs below that when $n = 3$,
$S_i = [r_1]\times T_{i2}\times T_{i3}$  where $T_{ij}\cap T_{kj}
= \emptyset$ for all $i\neq k$ and both $j = 2, 3$.
Therefore, each equivalence class
corresponds to the complete bipartite graph $T_{i2} \times T_{i3}$,
which is exactly Fink's representation in~\cite{Fink}.

%$\switch(2,b,a) = \switch(3,a,b)$ and
%$\switch(2,a,b) = \switch(3,b,a)$.  

We first argue that each equivalence class $S_i$ has the form $[r_1]\times
T_{i2}\times T_{i3}$.
Let $a = (a_1,a_2,a_3)$, $b = (b_1,b_2,b_3) \in S_i$.
It suffices to prove that
$\switch(2,b,a)$,
$\switch(3,b,a)$, $\switch(2,a,b)$, and $\switch(3,a,b)\in S_i$.
By Lemma~\ref{lemmareach1}
it suffices to consider the case where $a_1 = b_1$,
so that $d(a,b) \le 2$.  
If $d(a,b) = 1$ then each switch is either $a$ or $b$,
and the conclusion follows.
Now assume that $d(a,b) = 2$.
Suppose that $\switch(2,b,a)\notin S$.
Let $T = S \cup ([r_1] \times \{a_2\} \times \{b_3\})$.
We prove that $T$ is $1$-switchable.
Let $e, e' \in T$ satisfy $d(e,e') = 2$ and $e_1 \not = e'_1$.
By symmetry it suffices to prove that $\switch(1,e,e') \in T$.
If $e \in S$,
then $\switch(1,e,e')\in S \subseteq T$ by Lemma~\ref{lemmareach1},
and if $e \in T \setminus S$,
then $\switch(1,e,e')\in T$ by the definition of $T$.
Thus $T$ is $1$-switchable.
Using Lemma~\ref{lemmareach1}
it is also easy to see that $\CPT \subsetneq \CPS$,
which contradicts the maximality of $S$.
This proves that $\switch(2,b,a) \in S$,
and since it is connected to $b$,
it is in $S_i$.
Analogous proofs show that $\switch(3,a,b)$, $\switch(2,a,b)$,
$\switch(3,b,a)$ are in $S_i$.   
This proves that each equivalence class can be written in a ``block form"
$S_i = [r_1]\times T_{i2}\times T_{i3}$.
(By Example~\ref{exnotblock},
arbitrary $t$-switchable sets need not have a block form.)

Now suppose that $T_{i2}\cap T_{j2}\neq \emptyset$ for some distinct $i, j$.
Let $a_2\in T_{i2}\cap T_{j2}$.
Then by Lemma~\ref{lemmareach1}
there exist $a = (a_1, a_2, a_3)\in S_i$
and $b = (a_1, a_2, b_3)\in S_j$.
However, since $d(a,b) = 1$, $a$ and $b$ are
connected, which is a contradiction since they are in distinct
equivalence classes.  Therefore $T_{i2}\cap T_{j2}= \emptyset$.
Similarly,
$T_{i3}\cap T_{j3}= \emptyset$.
\end{remark}

\section{Prime Components for $t = n$}\label{sectprimecomponent}

In this section
we prove that the minimal components of $\CIn$
%We prove in this section that the minimal components of $\CIn$
are all prime ideals.
Ay and Rauh~\cite{AR} prove that $\CIo$ is radical,
so that all the components of $\CIo$ are minimal and prime ideals.
%implies this same result for $\CIo$.
In Section~\ref{sectexamples} we show by example 
that $\CIn$ may have embedded, and thus non-prime, components.
We consider it an interesting question to determine if the
minimal components of $\CI$ are prime for all $t$
(for examples, see Section~\ref{sectexamples}).

For all $t$,
by Theorem~\ref{thmminprimescont},
every prime ideal minimal over $\CI$ is of the form $\CPS = \CVar + \CItS$
for some maximal $t$-switchable set $S$.
By Corollary~\ref{corITpart},
the binomial portion of the $\CPS$-primary component of $\CI$ is $\CItS$.
Thus to prove that the minimal components of~$\CIn$ are prime,
it suffices to prove that $\CVarn$ is contained in the $\CPSn$-primary component.

We first prove that the equivalence classes of $n$-switchable sets
can be given in a block form.
By Remark~\ref{rmk4Fink},
when $t = 1$ and $n = 3$,
we also have a block form,
but an arbitrary $t$-switchable set need not have it
(see Example~\ref{exnotblock}).

\begin{thm}\label{thmblockdec}
Let $S$ be an $n$-switchable subset of $N$.
Let $T_1, \ldots, T_l$ be the equivalence classes for the relation of connected.
Then the following hold:
\begin{enumerate}
\item%\label{thmblockdef1}
Each $T_i$ is of the form $T_i = T_{i1} \times \cdots \times T_{in}$
for some $T_{ij} \subseteq [r_j]$.
\item%\label{thmblockdef3}
If $a \in T_i$, $b \in N$, and $\switch(j,a,b) \in S$,
then $\switch(j,a,b) \in T_i$.
\item%\label{thmblockdef4}
If $i, j$ are distinct in $[l]$,
there exist distinct $p_1, \ldots, p_m \in [n]$ with $m \ge 3$
such that for all $k = 1, \ldots, m$,
$T_{ip_k} \cap T_{jp_k} = \emptyset$.
\end{enumerate}
\end{thm}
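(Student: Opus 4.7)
The plan is to handle the three parts in order: part (1) establishes the block (product) form, part (2) is an immediate consequence of the definition of connectedness, and part (3) uses (1) together with $n$-switchability in a contradiction argument.

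For part (1), I would first apply Lemma~\ref{lemconnected} with $t = n$ to establish the key closure property: whenever $a, b \in T_i$ and $L \subseteq [n]$, the element $\switch(L, a, b)$ lies in $S$ and is connected to $a$, hence belongs to $T_i$. Define $T_{ij} = \{a_j : a \in T_i\}$ for each $j \in [n]$; the inclusion $T_i \subseteq T_{i1} \times \cdots \times T_{in}$ is automatic. For the reverse inclusion, given $c \in T_{i1} \times \cdots \times T_{in}$, pick for each $j$ some $a^{(j)} \in T_i$ with $a^{(j)}_j = c_j$, set $e_1 = a^{(1)}$, and define $e_j = \switch(\{j\}, e_{j-1}, a^{(j)})$ for $j = 2, \ldots, n$; a short induction shows that each $e_j \in T_i$ and that its first $j$ coordinates match those of $c$, so that $e_n = c \in T_i$. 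Part (2) is then immediate from Definition~\ref{defconnected}: $a$ and $\switch(j, a, b)$ agree outside component $j$, so they are adjacent on a length-one path in $S$ and lie in the same equivalence class.

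For part (3), I would argue by contradiction. Let $K = \{k \in [n] : T_{ik} \cap T_{jk} = \emptyset\}$ and suppose $|K| \le 2$. For each $k \notin K$ pick $c_k \in T_{ik} \cap T_{jk}$. Using the product structure from part (1), choose $a \in T_i$ and $b \in T_j$ with $a_k = b_k = c_k$ for all $k \notin K$, filling the remaining coordinates freely in each respective factor. Then $d(a, b) \le |K| \le 2$. If $d(a, b) \le 1$, then $a$ and $b$ are connected in $S$ (trivially if equal, by a single step otherwise), contradicting $T_i \neq T_j$. If $d(a, b) = 2$ with $a$ and $b$ differing in components $p_1, p_2$, then $n$-switchability yields $e = \switch(p_1, a, b) \in S$; this $e$ differs from $a$ only in component $p_1$ and from $b$ only in component $p_2$, so it is connected to both, again forcing $T_i = T_j$. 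Hence $|K| \ge 3$, as required.

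The main obstacle is part (3): parts (1) and (2) are clean consequences of Lemma~\ref{lemconnected} and the definition of connectedness, whereas (3) must combine the product decomposition from (1) with $n$-switchability and a careful case analysis on $d(a, b)$ to produce a bridging element that lies in two supposedly distinct equivalence classes. The product form is essential, since it lets me force $a$ and $b$ to agree on all overlap coordinates simultaneously, bringing $d(a,b) \le 2$ so that $n$-switchability becomes applicable; Example~\ref{exnotblock} illustrates that this block structure genuinely requires the full $n$-switchability (via Lemma~\ref{lemconnected} with $t = n$) rather than weaker $t$-switchability.
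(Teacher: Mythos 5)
Your proposal is correct and follows essentially the same route as the paper: part (1) rests on the closure property from Lemma~\ref{lemconnected} (you merely spell out the inductive coordinate-by-coordinate construction that the paper leaves implicit in its ``it suffices to prove''), part (2) is the same one-line distance argument, and part (3) is the paper's contradiction argument, with your explicit $n$-switchability step being exactly the content of Lemma~\ref{lemdist2conn} that the paper cites. No gaps.
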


\begin{proof}
For (1) it suffices to prove that whenever $a$ and $b$
are connected in $S$,
then for all $K \subseteq [n]$,
$\switch(K,a,b)$ is in $S$ and connected to $a$ and $b$.
But this is precisely Lemma~\ref{lemconnected}.

By Definition~\ref{defconnected},
$T_1, \ldots, T_l$ form a partition of $S$.

If $a \in T_i$, $b \in S$ such that  $\switch(j,a,b) \in S$,
then $d(a, \switch(j,a,b)) = 1$ so $a$ and $\switch(j,a,b)$ are
connected and therefore $\switch(j,a,b)\in T_i$.

Suppose that condition (3) fails.
By possibly reindexing,
$T_{1i} \cap T_{2i} \not = \emptyset$ for $i \ge 3$.
Let $a \in  T_1$, $b \in T_2$
such that $a_i = b_i$ for $i \ge 3$.
Then $1 \le d(a,b) \le 2$.
But then by Lemma~\ref{lemdist2conn},
$a$ and $b$ are connected,
so they
are both in $T_1 \cap T_2$,
giving a contradiction.
This proves~(3).
\end{proof}

By Theorem~\ref{thmIStisprime} ,
when we think of elements of $N$
as $(t+1)$-tuples rather than $n$-tuples
by reindexing $[r_{t+1}] \times \cdots \times [r_n]$
by $[r_{t+1} \cdots r_n]$,
since $\CInmin = \CIn$,
we have that the $t$-switchable sets in general have a block form.
However,
when the last $n-t$ components are spelled out explicitly,
there is not necessarily a block form;
see Example~\ref{exnotblock}.

Set the {\bf distance} between equivalence classes $T_i$ and $T_j$
to be the number of
indices $k$ such that $T_{ik}\cap T_{jk} = \emptyset$.
We denote this distance as $d(T_i, T_j)$, just as for elements of
$N$.
Thus, part (3) of the theorem above proves that $d(T_i, T_j) \ge 3$.
We note that the decomposition
does not require all components of two classes to be disjoint or equal,
but for each pair $T_i, T_j$
at least three have to be disjoint.

\begin{prop}\label{propMinDist} If $S \subseteq N$ is a maximal
  $n$-switchable set with equivalence classes $T_1,\ldots, T_k$,
 then for any class $T_j$ there exists another class $T_{l_j}$
  such that $d(T_j, T_{l_j}) = 3$.
\end{prop}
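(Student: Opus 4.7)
The plan is a proof by contradiction. Assume $k \geq 2$ (otherwise the statement is vacuous), and fix a class $T_j$ such that $d(T_j, T_m) \geq 4$ for every $m \neq j$; by Theorem~\ref{thmblockdec}(3) this is precisely the negation of the desired conclusion. I will enlarge $S$ to a strictly larger $n$-switchable set $T$ with $\CPT \subseteq \CPS$, contradicting the maximality of $S$.

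Since $k \geq 2$, Proposition~\ref{propreach} implies $S \neq N$, so $T_j \neq N$, and some coordinate slot of $T_j$ is a proper subset of the corresponding $[r_i]$. After relabeling components, pick $u_0 \in [r_1] \setminus T_{j1}$ and set
$$T_j' = \{u_0\} \times T_{j2} \times \cdots \times T_{jn}, \qquad T = S \cup T_j'.$$
Any element of $T_j' \cap T_m$ for $m \neq j$ would force $T_{ji} \cap T_{mi} \neq \emptyset$ for all $i \geq 2$, giving $d(T_j, T_m) \leq 1$ and contradicting Theorem~\ref{thmblockdec}(3); so $T_j' \cap S = \emptyset$ and $T \supsetneq S$ properly.

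Next I would verify that $T$ is $n$-switchable. For $a, b \in T$ with $d(a,b) = 2$, the only substantive case is $a \in T_j'$, $b \in T_m$ with $m \neq j$: the hypothesis $d(T_j, T_m) \geq 4$ forces at least three of the disjoint coordinate indices to lie in $\{2, \ldots, n\}$, and on each of them $a_i \in T_{ji}$ must differ from $b_i \in T_{mi}$, so $d(a,b) \geq 3$ and this case does not arise. The cases $a, b \in S$, $a, b \in T_j'$, and $a \in T_j'$ with $b \in T_j$ (where $a$ and $b$ differ in component~$1$ and one other index $q \geq 2$) reduce to direct coordinate-wise checks using the product form. The same distance bound rules out distance-$1$ adjacency from $T_j'$ to any $T_m$ with $m \neq j$, so the equivalence classes of $T$ are the unchanged $T_m$ ($m \neq j$) together with the merged class $T_j \cup T_j' = (T_{j1} \cup \{u_0\}) \times T_{j2} \times \cdots \times T_{jn}$.

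Finally I would show $\CPT \subseteq \CPS$. Clearly $\CVarT \subsetneq \CVar \subseteq \CPS$. Among the generators $f_{i,a,b}$ of $\CItT$, those with both endpoints in some $T_m$ ($m \neq j$) or both in $T_j$ lie in $\CItS$; those with both in $T_j'$ have every monomial in $\CVar$; and for the mixed case $a \in T_j$, $b \in T_j'$, a coordinate-wise check shows that exactly one of $\switch(i,a,b)$, $\switch(i,b,a)$ lands in $T_j'$ (hence contributes a variable in $\CVar$) while the other lands in $T_j$, so each monomial of $f_{i,a,b}$ has a factor in $\CVar$. Thus $\CItT \subseteq \CPS$, hence $\CPT \subseteq \CPS$, contradicting maximality of $S$. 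The main obstacle is the verification in the previous paragraph that $T$ is $n$-switchable and carries the expected class decomposition: this is exactly where the bound $d(T_j, T_m) \geq 4$ (rather than the weaker $\geq 3$ already provided by Theorem~\ref{thmblockdec}(3)) is essential, since extending $T_j$ in one coordinate consumes one unit of separation from every other class.
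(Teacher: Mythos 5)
Your proposal is correct and takes essentially the same route as the paper's proof: assume for contradiction that $d(T_j,T_m)\ge 4$ for all $m\ne j$, enlarge $T_j$ by one new value in a single coordinate to obtain a strictly larger set that is still $n$-switchable (each pairwise distance drops by at most one, hence stays $\ge 3$), and conclude that the associated prime shrinks, contradicting maximality. The only cosmetic difference is that the paper takes the new first-coordinate value from $T_{21}\setminus T_{11}$ after arranging $T_{11}\cap T_{21}=\emptyset$, whereas you take an arbitrary element of $[r_1]\setminus T_{j1}$; both choices exist and both work, and your verification of switchability and of $\CPT\subseteq\CPS$ is just a more explicit version of what the paper leaves terse.
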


\begin{proof}
Without loss of generality $j = 1$.
By Theorem~\ref{thmblockdec} (3),
for all $l\neq 1$,
$d(T_1,T_l)\geq 3$.
Suppose for contradiction that for all $l\neq 1$,
$d(T_1,T_l) \geq 4$.
By possibly reindexing,
$T_{11} \cap T_{21} =~\emptyset$.
Let $u\in T_{21}\setminus T_{11}$.
Set $T_1' =
(T_{11}\cup\{u\}) \times T_{12}\times \cdots\times T_{1n}$,
and $S' =  T_1' \cup T_2\cup \cdots \cup T_k$.
We argue that $S'$ is $n$-switchable.
Let $a,b\in S'$ such that $d(a,b) = 2$.
For all $l\neq 1$,
$d(T_1, T_l)\geq 4$,
so that $d(T_1', T_l)\geq 3$.
By Theorem~\ref{thmblockdec} (3),
$d(T_i,T_l)\geq 3$ for all distinct $i,l$.
Hence $a$ and $b$ must either both be in $T_1'$
or they must both be in $T_i$ for some $i \ge 1$.
By the structure of the equivalence classes from Theorem~\ref{thmblockdec},
and by the definition of $T_1'$,
all the appropriate switches are contained in $S'$.
Hence $S'$ is still switchable.
But %as in the proof of Proposition~\ref{propreach},
$\CPSn$ properly contains~$\CPSnP$,
which contradicts the maximality of $S$.
\end{proof}

\begin{lem}\label{lemMinDist}
Let $S$ be a maximal $n$-switchable set and $a\in N \setminus S$.
Then there exist $b,c\in S$ such that $d(a,b) \leq 2$,
$d(b,c) = 3$,
$b$ and $c$ are not connected,
and there exists $i \in [n]$
such that $c_i = a_i$ and $a_i\neq b_i$.
\end{lem}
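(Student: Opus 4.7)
I would begin by using the maximality of $S$ via Remark~\ref{rmkreach}: since $a\notin S$, the set $S\cup\{a\}$ is not $n$-switchable, so there exist $f\in S$ and distinct coordinates $i,j$ with $d(a,f)=2$ (the two differing coordinates being $i$ and $j$) and, after possibly swapping $i$ and $j$, $\switch(i,a,f)\notin S$. Let $T_m$ be the equivalence class of $f$, which has block form $T_m=T_{m,1}\times\cdots\times T_{m,n}$ by Theorem~\ref{thmblockdec}(1). The element $\switch(i,a,f)$ satisfies $\switch(i,a,f)_l\in T_{m,l}$ for every $l\ne j$ (as for $l\notin\{i,j\}$ it equals $f_l$ and for $l=i$ it equals $f_i$), so the only obstruction to $\switch(i,a,f)$ lying in $T_m$ must come from coordinate $j$; this forces $a_j\notin T_{m,j}$.

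Taking $b=f$, I have $b\in S$ and $d(a,b)=2$. By Remark~\ref{rmkreach} applied to coordinate $j$ and value $a_j$, some element of $S$ has $j$-th coordinate $a_j$; let $T_{m'}$ be its equivalence class, so $a_j\in T_{m',j}$. Because $a_j\notin T_{m,j}$, we have $T_{m'}\ne T_m$, and hence $d(T_m,T_{m'})\ge 3$ by Theorem~\ref{thmblockdec}(3). I would construct $c\in T_{m'}$ by setting $c_j=a_j$ and then choosing $c_l\in T_{m',l}$ for each $l\ne j$, preferring $c_l=b_l$ whenever $b_l\in T_{m',l}$. Since $b_j\in T_{m,j}$ and $a_j\notin T_{m,j}$, coordinate $j$ automatically gives $c_j\ne b_j$, and $c\notin T_m$ guarantees that $b$ and $c$ are not connected. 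Taking the index in the conclusion of the lemma to be $j$ then yields $c_j=a_j\ne b_j$, as required.

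The principal obstacle is to arrange $d(b,c)=3$ exactly rather than merely $d(b,c)\ge 3$. The optimal choice of $c$ just described yields $d(b,c)=1+\#\{l\ne j:b_l\notin T_{m',l}\}$, so one must select $T_{m'}$ (and possibly refine $b$ inside $T_m$) so that this count equals exactly~$2$; equivalently, so that $(T_m,T_{m'})$ has exactly three separating coordinates with $j$ among them, and $b_l\in T_{m',l}$ at every non-separating $l\ne j$. The crux is to show such a $T_{m'}$ always exists. If no class $T_{m'}$ with $a_j\in T_{m',j}$ meets these conditions, the plan is to contradict the maximality of $S$ by exhibiting a strict proper extension $S'\supsetneq S$ that remains $n$-switchable---for instance by enlarging the block $T_{m,j}$ to include $a_j$, which would cause $a\in S'$---and then verifying via the block form and the class-distance inequalities of Theorem~\ref{thmblockdec}(3) that the resulting prime ideal $\CPSnP$ is properly contained in $\CPSn$. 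This final reduction mirrors the argument used in the proof of Proposition~\ref{propMinDist}, where enlarging a single block coordinate of a class while preserving pairwise distances $\ge 3$ already suffices to contradict maximality.
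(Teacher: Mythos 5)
Your opening is correct and essentially matches the paper's: you use Remark~\ref{rmkreach} to produce $f\in S$ with $d(a,f)=2$ and a failing switch, deduce $a_j\notin T_{m,j}$ from the block form of Theorem~\ref{thmblockdec}(1), and locate a second class $T_{m'}$ with $a_j\in T_{m',j}$, necessarily at distance $\ge 3$ from $T_m$. (The paper reaches the same configuration, writing $i_1$ for your $j$ and getting $b$ with $d(a,b)\le 2$ by observing that otherwise $a$ could be adjoined to $S$ as an isolated point.) You have also correctly isolated the real difficulty, namely arranging $d(b,c)=3$ exactly, which requires $b_l\in T_{m',l}$ at every non-separating coordinate $l$; the paper is terse on precisely this point.

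The gap is that your two cases do not exhaust the possibilities. Your ``good case'' demands a class $T_{m'}$ with \emph{three} properties: $d(T_m,T_{m'})=3$ with $j$ separating, $a_j\in T_{m',j}$, and $b_l\in T_{m',l}$ at every non-separating $l$. Your fallback --- enlarging $T_{m,j}$ by $a_j$ --- preserves $n$-switchability only when \emph{no} class at distance exactly $3$ from $T_m$ has $j$ separating with $a_j$ in its $j$-block, since exactly then do all inter-class distances stay $\ge 3$. Suppose some class $T^*$ has $d(T_m,T^*)=3$ with $j$ separating and $a_j\in T^*_j$, but $b_{l_0}\notin T^*_{l_0}$ for some non-separating $l_0$. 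Then your construction inside $T^*$ yields only $d(b,c)\ge 4$, while the enlargement drops $d(T'_m,T^*)$ to $2$ and destroys switchability: take $u\in T'_m$ and $v\in T^*$ agreeing at all non-separating coordinates, so $d(u,v)=2$; the switch at either separating coordinate lies in no equivalence class of $S'$ (it is at distance $1$ from $u$ or $v$ but outside both blocks), so $S'$ is not $n$-switchable and no contradiction with maximality is obtained. Neither branch of your plan applies in this intermediate situation. The paper avoids this by keying its case split to the enlargement condition itself (its Case~2 hypothesis is exactly ``for every class at distance $3$, if $a_{i_1}\in T_{ji_1}$ then $i_1$ is not separating''), so that its Case~2 closes, and it then asserts existence of $c$ in Case~1. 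A separate minor point: enlarging $T_{m,j}$ by $a_j$ need not put $a$ itself into $S'$, since $a_i$ may also lie outside $T_{m,i}$; this does not affect the intended contradiction (which comes from $\CPSnP\subseteq\CPSn$), but the parenthetical claim is unjustified as written.
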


\begin{proof}
%The existence of $b$ such that $d(a,b)\leq 2$ holds for maximal
%$t$-switchable set for any~$t$,
%so we write it that way in the proof below.
(In this paragraph we allow $t$ to be arbitrary in $[n]$
and that $S$ is a maximal $t$-switchable set.)
Suppose that $d(a,b)\geq 3$ for all $b\in S$.
Set $T = S\cup\{a\}$.
The new element $a$ in $T$ is not connected in $T$
with any other element of $T$,
so that $T$ is $t$-switchable,
and if $c,d\in S$ are connected in $T$,
they must be connected in $S$.
But then $\CPS$ properly contains $\CPT$,
which contradicts the maximality of~$S$.
Hence there must exist $b\in S$ such that $d(a,b)\leq 2$.

We denote the equivalence class of all elements in $S$
that are connected to $b$ by $T_b$.
By Proposition~\ref{propreach},
$T_b \not = S$. Let $i_1,i_2$ denote the indices where $a$ and $b$
differ. Without loss of generality we assume $a_{i_1}\notin
T_{bi_1}$.  
%there exists more than one equivalence class of connectedness.
Let $\{T_1,\ldots,T_l\}$ be the set of all equivalence classes of $S$
for which $d(T_i,T_b) = 3$.
By Proposition~\ref{propMinDist} this set is
non-empty.  We consider two cases. First, suppose there exists
$j\in\{1, \ldots l\}$ such that $a_{i_1}\in T_{ji_1}$ and
$T_{ji_1}\cap T_{bi_1} = \emptyset$.  Then $c$ exists as an element of
$T_j$.  Second, suppose that if $a_{i_1}\in T_{ji_1}$ then
$T_{ji_1}\cap T_{bi_1}\neq \emptyset$.  In this case, every element
$c\in S$ such that $d(b,c) = 3$ and $c$ and $b$ are not connected has
the property that $a_{i_1} \neq c_{i_1}$.  Build $T_b'$ from $T_b$ by
replacing $T_{bi_1}$ by $T_{bi_1}' = T_{bi_1}\cup \{a_{i_1}\}$.  Build
$S'$ by replacing $T_b$ in $S$ by $T_b'$.  We argue that the set  $S'$
is still $n$-switchable by arguing that the distance between $T'_b$
and any other equivalence class of $S$ is still at least~$3$.  Let $T$
be any equivalence class of $S$ with $T\neq T_b$.  Then either
$d(T,T_b) \geq 4$ and therefore $d(T,T_b')\geq 3$ or $T\in
\{T_1,\ldots T_l\}$.  In the latter case, either $a_{i_1}\notin
T_{i_1}$, in which case we still have $d(T, T_b') = 3$ or $a_{i_1}\in
T_{i_1}$ and then $T_{i_1}\cap T_b'\neq \emptyset$, but we assumed in
this case that $T_{i_1}\cap T_b\neq \emptyset$ so we still have that
$d(T,T_b) = 3$.  Since $S\subset S'$,  $\CVarnP\subset \CVarn$.  Let
$f_{j,e,o}\in \CInSP$.  Either $e,o$ are connected in $S$ or $o, e\in
T_b'$, $j = i_1$ and $o\notin T_b$, so $o_{i_1} = a_{i_1}$.  In the first case
$f_{j,e,o}\in \CInS$.  In the second case $o,\switch(i_1,e,o)\notin
S$, so $x_o,\switch(i_1,e,o)\in 
\CVarn$ and therefore $f_{i_1,e,o}\in \CVarn$.  Thus, $\CPSn$ contains
$\CPSnP$, which contradicts the  maximality of~$S$. Hence there exists
some $c \in S$ of distance $3$ from $b$ 
such that for some $i$,
$c_i = a_i \not = b_i$.
\end{proof}

\begin{lem}\label{reductionLemma2}
Let $a,b,c\in N$.
\begin{enumerate}
\item
If $a_i = c_i$, then
$x_af_{i,b,c} = x_cf_{i,a,b}-x_{\switch(i,b,a)}f_{i,c,\switch(i,a,b)}$.
\item
If $d(a,b) = 2$ and $d(b,c) = 3$,
$i \le t$,
$a_i = c_i$,
$b_i\neq a_i$,
and $b_j\neq c_j$ for some $j\neq i$ and $j\in [t]$,
then $x_af_{i,b,c}f_{j,b,c}\in \CI$.
\end{enumerate}
\end{lem}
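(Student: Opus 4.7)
The plan is to prove (1) by a direct algebraic identity and then to apply (1) twice to establish (2), with slice minor generators of $\CI$ dropping out at each stage.

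For (1), I would expand both sides using the definition $f_{i,x,y} = x_x x_y - x_{\switch(i,x,y)} x_{\switch(i,y,x)}$ and simplify using the three switch identities that follow from $a_i = c_i$: namely $\switch(i,b,a) = \switch(i,b,c)$, $\switch(i,\switch(i,a,b),c) = a$, and $\switch(i,c,\switch(i,a,b)) = \switch(i,c,b)$. After substitution, the cross-term involving $x_c x_{\switch(i,a,b)} x_{\switch(i,b,a)}$ cancels between the two summands on the right, leaving $x_a f_{i,b,c}$. This is essentially bookkeeping; the sign conventions need to be tracked carefully, but the argument is routine.

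For (2), set $e = \switch(i,b,a)$ and $a' = \switch(i,a,b)$. Multiplying the identity from (1) by $f_{j,b,c}$ gives
\[
x_a f_{i,b,c} f_{j,b,c} = x_c f_{i,a,b} f_{j,b,c} - x_e f_{i,c,a'} f_{j,b,c}.
\]
The first summand lies in $\CI$: since $d(a,b) = 2$, $a_i \ne b_i$, and $i \in [t]$, the polynomial $f_{i,a,b}$ is a slice minor and hence a generator of $\CI$. Thus it suffices to show that $x_e f_{j,b,c} \in \CI$, because then the second summand, which equals $f_{i,c,a'} \cdot (x_e f_{j,b,c})$, also lies in $\CI$.

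To show $x_e f_{j,b,c} \in \CI$, I would apply (1) again (using $f_{j,b,c} = f_{j,c,b}$), now with $j$ in place of $i$ and with the roles of $b,c$ swapped so that the applicable hypothesis becomes $e_j = b_j$; this holds because $e_k = b_k$ for all $k \ne i$, and in particular $e_j = b_j$. The resulting identity expresses $x_e f_{j,b,c}$ as a linear combination, with coefficients monomials, of $f_{j,e,c}$ and $f_{j,b,\switch(j,e,c)}$. The main obstacle is the combinatorial verification that both of these $f$-factors are slice minor generators of $\CI$: writing the three components where $b$ and $c$ differ as $\{i,j,m\}$, one checks that $e$ and $c$ agree outside $\{j,m\}$, so $d(e,c) = 2$, and that $b$ and $\switch(j,e,c)$ agree outside $\{i,j\}$, so $d(b,\switch(j,e,c)) = 2$. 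Since $j \in [t]$ in both cases, both $f$-factors are slice minors and therefore in $\CI$. It follows that $x_e f_{j,b,c} \in \CI$, completing the proof of (2).
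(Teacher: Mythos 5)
Your proof is correct and follows essentially the same route as the paper: part (1) by direct expansion using the switch identities forced by $a_i=c_i$, and part (2) by multiplying the identity from (1) by $f_{j,b,c}$ and checking each summand lies in $\CI$. The only cosmetic difference is that where the paper invokes its earlier reduction lemma to conclude $x_{\switch(i,b,c)}f_{j,b,c}\in \CI$ (note $\switch(i,b,c)=\switch(i,b,a)=e$ because $a_i=c_i$ and $b_i\neq c_i$), you re-derive the same fact by a second application of (1), obtaining an equivalent decomposition of $x_e f_{j,b,c}$ into monomial multiples of slice minors.
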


\begin{proof}
\begin{enumerate}
\item This is straightforward rewriting:
\begin{align*}
x_af_{i,b,c} = & x_ax_bx_c - x_cx_{\switch(i,b,a)}x_{\switch(i,a,b)} +
x_cx_{\switch(i,b,a)}x_{\switch(i,a,b)} - x_a
x_{\switch(i,b,c)}x_{\switch(i,c,b)} \cr
= & x_ax_bx_c - x_cx_{\switch(i,b,a)}x_{\switch(i,a,b)} +
x_cx_{\switch(i,b,a)}x_{\switch(i,a,b)} - x_{\switch(i,\switch(i,a,b),c)}
x_{\switch(i,b,a)}x_{\switch(i,c,\switch(i,a,b))}\cr
= &  x_cf_{i,a,b} - x_{\switch(i, b,a)}f_{i,c,\switch(i,a,b)}.
\end{align*}
\item
If $d(a,b) = 2$, then $f_{i,a,b}\in \CI$.
Since $d(b,c) = 3$,
then $x_{\switch(i,b,c)}f_{j,b,c}\in \CI$
by Lemma \ref{lemReduction}.
Since $\switch(i,b,a) = \switch(i,b,c)$,
the conclusion follows from (1).
\end{enumerate}
\end{proof}

\begin{thm}\label{thmminimalcompisprime}
The minimal components of $\CIn$ are prime ideals.
\end{thm}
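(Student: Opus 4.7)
The plan is to combine Theorem~\ref{thmminprimescont} with Corollary~\ref{corITpart}. By Theorem~\ref{thmminprimescont}, every minimal prime of $\CIn$ has the form $\CPSn = \CVarn + \CInS$ for some maximal $n$-switchable~$S$, and by Corollary~\ref{corITpart} the binomial part $\CInS$ already lies in the $\CPSn$-primary component of $\CIn$. Since a minimal primary component is contained in its associated prime, it remains to show that every $x_a$ with $a \in N \setminus S$ also lies in the $\CPSn$-primary component; equivalently, to produce $g \notin \CPSn$ with $g\, x_a \in \CIn$.

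Fix $a \notin S$ and invoke Lemma~\ref{lemMinDist} to choose $b, c \in S$ with $d(a,b) \le 2$, $d(b,c) = 3$, $b$ and $c$ in distinct equivalence classes of $S$, and some $i \in [n]$ satisfying $c_i = a_i \ne b_i$. Because $d(b,c) = 3$ and $b_i \ne c_i$, I also pick some $j \ne i$ with $b_j \ne c_j$. If $d(a,b) = 2$, then Lemma~\ref{reductionLemma2}(2) applied with $t = n$ gives $x_a\, f_{i,b,c}\, f_{j,b,c} \in \CIn$. If $d(a,b) = 1$, the only discrepancy between $a$ and $b$ must occur at coordinate $i$, so $a = \switch(i, b, c)$; applying Lemma~\ref{lemReduction} to the one-step path $b \to a$ (which changes only coordinate $i \ne j$) together with the endpoint distance $d(a,c) = 2$ produces $x_a\, f_{j,b,c} \in \CIn$. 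In either case, $x_a$ times a product of one or two factors $f_{k,b,c}$ with $k \in \{i,j\}$ lies in $\CIn$.

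The main obstacle is then to verify that each such factor $f_{k,b,c}$ escapes the prime $\CPSn$. If at least one of $\switch(k,b,c)$ or $\switch(k,c,b)$ is absent from $S$, its variable is killed by $\CVarn$ and the reduction yields $f_{k,b,c} \equiv x_b x_c \pmod{\CVarn}$, which is not in $\CPSn$ because $\CPSn$ is prime and $b, c \in S$. Otherwise, by Theorem~\ref{thmblockdec}(2), $\switch(k,b,c)$ lies in the equivalence class $T_\alpha$ of $b$ and $\switch(k,c,b)$ lies in the class $T_\beta$ of $c$. Using the Segre description of $\CInS$ from Discussion~\ref{discSegre} and Theorem~\ref{thmIStisprime}, the ideal $\CPSn$ is homogeneous with respect to the natural Segre multi-grading in which $x_d$ for $d \in T_\gamma$ has weight $1$ in each coordinate $(\gamma, \ell, d_\ell)$. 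The two monomials of $f_{k,b,c}$ carry identical weight in every coordinate except in the entries $(T_\alpha, k, \cdot)$ and $(T_\beta, k, \cdot)$, where they disagree precisely because $b_k \ne c_k$. Hence $f_{k,b,c}$ is not multi-homogeneous; were it to lie in the multi-homogeneous ideal $\CPSn$, each of its two monomial components would individually belong to $\CPSn$, contradicting primality since none of $x_b, x_c, x_{\switch(k,b,c)}, x_{\switch(k,c,b)}$ belongs to $\CPSn$. We conclude $f_{k,b,c} \notin \CPSn$, so by primality $f_{i,b,c}\, f_{j,b,c}$ is also outside $\CPSn$, and the theorem follows.
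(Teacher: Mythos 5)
Your proof is correct and follows essentially the same route as the paper: Theorem~\ref{thmminprimescont} and Corollary~\ref{corITpart} reduce the problem to showing each $x_a$ with $a\notin S$ lies in the $\CPSn$-primary component, and Lemmas~\ref{lemMinDist}, \ref{lemReduction} and~\ref{reductionLemma2} then supply the same multiplier $f_{i,b,c}f_{j,b,c}$ (or $f_{j,b,c}$) outside $\CPSn$. The only divergence is your final non-membership check: the paper notes that Theorem~\ref{thmblockdec} (its block and distance-$\ge 3$ structure, applied to the disconnected $b,c$ with $d(b,c)=3$) forces $\switch(k,b,c),\switch(k,c,b)\notin S$ for every $k$ with $b_k\neq c_k$, so your first case always applies and your second, multigraded case is vacuous.
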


\begin{proof}
Let $Q$ be a minimal component for $\CIn$.
By Theorem~\ref{thmminprimescont},
the corresponding associated prime $P$ is of the form $\CPSn$
for some maximal $n$-switchable set $S$.
By Corollary~\ref{corITpart},
$\CInS \subseteq Q$.

Let $\mathbf{f}_S$  be the product of those elements $f_{i,a,b}\in \CItn$
that are not in $\CPSn$,
and let $\mathbf{x}_S = \prod_{a\in S} x_a$,
and $\rho_S = \mathbf{f}_S\mathbf{x}_S$.
Since~$\CPSn$ is a prime ideal,
it follows that $\rho_S\notin \CPSn$.
We prove that $\CPSn = \CIn:\rho_S$, and hence $Q = \CPSn$.

Let $x_a\in \CPSn$.
Then $a\notin S$.
By Lemma~\ref{lemMinDist},
there exist $b,c\in S$ such that $d(a,b) \leq 2$,
$b$ and $c$ are not connected,
$d(b,c) = 3$,
and for some $i$, $c_i = a_i\neq b_i$.
By construction $x_b, x_c\notin \CPSn$.
Since $b$ and $c$ are not connected,
then for all $j$ such that $b_j \not = c_j$,
by the structure of equivalence classes in Theorem~\ref{thmblockdec},
$\switch(j,b,c), \switch(j,c,b) \not \in S$.
Thus $f_{j,b,c}\notin \CPSn$.
Thus for any such $j$ different from $i$,
$x_b x_c f_{i,b,c}f_{j,b,c}$ is not in $\CPSn$.
If $d(a,b) = 1$,
then by our construction $a = \switch(i,b,c)$
and thus $x_af_{j,b,c}\in \CIn$ by Lemma~\ref{lemReduction},
and $x_a \in \CIn : f_{j,b,c} \subseteq \CIn : \rho_S$.
If $d(a,b) = 2$, then $x_af_{i,b,c}f_{j,b,c}\in
\CIn$ for all $j\neq i$, by Lemma~\ref{reductionLemma2} (2),
and so again here $x_a\in \CIn: \rho_S$.
Thus $\CPSn\subseteq \CIn:\rho_S$.  Since~$\CPSn$ is a prime
ideal containing $\CIn$,  $\CPSn\subseteq \CIn:\rho_S\subseteq
\CPSn:\rho_S = \CPSn$.
\end{proof}

\begin{remark}\label{rmkstructurerhoS}
We note that $\rho_S$ is a product of more factors than absolutely necessary
for a given ideal.
For example, when $S = N$ and $n = 3$,
then $\CIn : \prod_{k=0}^{r_3} x_{0,0,k}
= \CInN = \CPnN$,
so the proper factor $\prod_{k=0}^{r_3} x_{0,0,k}$ of~$\rho_N$ suffices.
On the other hand,
the proof of Theorem~\ref{thmminimalcompisprime}
contains an algorithm for determining
for general $S$
the diagonal minor factors $p$ of $\rho_S$ necessary to argue
$\CIn:p = \CPSn$.
%$\CIn:\rho_S = \CPSn$.
These $p$ are not necessarily unique.
% and one such minor can be used for multiple variables.
\end{remark}

\section{Gr\"obner Bases}\label{sectGB}

In this section we give a Gr\"obner basis for $\CItS$
in many monomial orders including the reverse lexicographic and the lexicographic order
under appropriate orderings of the variables.
This generalizes the work of~\cite{CGG} and~\cite{H}.

Recall that Caniglia, Guccione and Guccione~\cite{CGG}
proved that the $r \times r$ minors of a generic $m \times n$ matrix
form a Gr\"obner basis %of the ideal they generate
with respect to any ``diagonal order",
where diagonal orders on a two-dimensional matrix
are monomial orders such that for any $r \times r$ submatrix,
the product of the variables on the main diagonal
is the leading term of the minor of that submatrix. We start with an
example which illustrates some of the subtleties that arise in
extending the notion of diagonal orders for matrices to hypermatrices.
One $2\times 2$ minor of the  
$2\times 2\times 2$ hypermatrix is $x_{1,1,2} x_{2,2,1}- x_{1,2,2} x_{2,1,1}$.
We might order monomials by comparing
the first two components in the indices and following the notion of diagonal orders
from~\cite{CGG}, so that $x_{1,1,2} x_{2,2,1} > x_{1,2,2} x_{2,1,1}$.
However, we get a different order from comparing the last two
components in the indices, as the inequality reverses.
Therefore, a generalization of a diagonal order to an $n$-dimensional hypermatrix
must come equipped with a further prioritization of the components.  

\begin{defin}
We enumerate $[r_{t+1}] \times \cdots \times [r_n]$
and treat these last $n-t$ components of elements of $N$ as one component,
so we may assume that $t = n-1$.
(This also covers the case $t = n$.)
When we write $\switch(K,a,b)$ in this sense and $t+1 \in K$,
we actually mean $\switch([t] \setminus K,a,b)$ in the usual sense.
Let $\{\delta_1, \ldots, \delta_{t+1}\} = [t+1]$.
A {\bf $\bf t$-diagonal order} on $R$
relative to the enumeration $\delta_1, \ldots, \delta_{t+1}$
is any monomial order $<$
with the following property:
for any $a, b \in N$,
if $i$ is the smallest index in $[t+1]$ such that $a_{\delta_i} \not = b_{\delta_i}$,
and if $j > i$ such that $a_{\delta_j} > b_{\delta_j}$,
then $x_a x_b > x_{\switch(i,a,b)} x_{\switch(i,b,a)}$
if and only if $a_{\delta_i} > b_{\delta_i}$.
%
%let $i$ be the smallest integer
%such that the two monomials have different entries in $
%for all $i, j \in [t+1]$,
%and all positive integers $r$,
%the leading term of any $r \times r$ slice submatrix of the $(t+1)$-dimensional
%$M = [x_a: a \in N]$
%in which only components $\delta_i$ and $\delta_j$ vary
%is the product of the variables on the main diagonal of the submatrix.
\end{defin}

%\begin{defin}
%Here we enumerate $[r_{t+1}] \times \cdots \times [r_n]$
%and treat these last $n-t$ components of elements of $N$ as one component,
%so we hereby assume that $t = n-1$.
%(This also covers the case $t = n$.)
%Let $\{\delta_1, \ldots, \delta_{t+1}\} = [t+1]$,
%and let $\varphi : [t+1] \to \{\min, \max\}$ be a function.
%A {\bf $\bf t$-diagonal order} on $R$
%relative to the enumeration $\delta_1, \ldots, \delta_{t+1}$
%and the function $\varphi$
%is any monomial order $<$
%with the following property:
%if $a, b \in N$ differ in at least two components
%and $i$ is the least integer such that $a_{\delta_i} \not =  b_{\delta_i}$,
%%$i \le t$,
%$j \in [t+1] \setminus \{i\}$,
%$a_{\delta_j} \not = b_{\delta_j}$,
%$a_{\delta_i} = \varphi(i)\{a_{\delta_i}, b_{\delta_i}\}$,
%$a_{\delta_j} = \varphi(j)\{a_{\delta_j}, b_{\delta_j}\}$,
%then $x_a x_b > x_{\switch(j,a,b)} x_{\switch(j,b,a)}$.
%\end{defin}

For example,
the lexicographic order in which the variables are ordered
in the lexicographic order of their indices
is a $t$-diagonal order where $\delta_i = i$,
and the component $t+1$ in this sense stands for the last $n-t$ entries.
% and $\varphi(i) = \max$ for all $i$.
A reverse lexicographic order is a $t$-diagonal order
where $\delta_i = t-i+2$,
and the variables are ordered $x_a > x_b$
if $\sum_{i < j} |a_i - a_j| < \sum_{i < j} |b_i - b_j|$
or if $\sum_{i < j} |a_i - a_j| = \sum_{i < j} |b_i - b_j|$
and $a > b$ in the reverse lexicographic order.
% and $\varphi(i) = \min$ for all $i$.
Obviously many other options are possible.
When $n = 2$,
then any $t$-diagonal order
%with $\varphi = \max$
is a diagonal order for $2 \times 2$ minors as given in \cite{CGG}.

\begin{lem}\label{lmGBred}
Let $1 \le t \le n$,
let $S$ be an equivalence class with respect to connectedness
within some $t$-switchable set.
Set
$$
G = \{f_{K,a,b} : K \subseteq [t], a, b \in S\}.
%G = \{f_{K,a,b} : K \subseteq [t], a, b \in S \text{ are connected}\}.
$$
Suppose that $a_1, \ldots, a_r, b_1, \ldots, b_r \in S$
have the property that for all $i = 1, \ldots, t$,
up to order, the $r$-list $a_{1i}, a_{2i}, \ldots, a_{ri}$
is the same as the $r$-list $b_{1i}, b_{2i}, \ldots, b_{ri}$,
and such that, up to order, the $r$-list
$(a_{1,t+1}, a_{1,t+2}, \ldots, a_{1,n}),
(a_{2,t+1}, a_{2,t+2}, \ldots, a_{2,n}),
\ldots,
(a_{r,t+1}, a_{r,t+2}, \ldots, a_{r,n})$
is the same as the $r$-list
$(b_{1,t+1}, b_{1,t+2}, \ldots, b_{1,n}),
(b_{2,t+1}, b_{2,t+2}, \ldots, b_{2,n}),
\ldots,
(b_{r,t+1}, b_{r,t+2}, \ldots, b_{r,n})$.
Then in any $t$-diagonal monomial order,
% the lexicographic order on the variables
%(ordered lexicographically by indices),
% $t$-diagonal order,
$p = x_{a_1} x_{a_2} \cdots x_{a_r} - x_{b_1} x_{b_2} \cdots x_{b_r}$
reduces with respect to $G$ to $0$.
\end{lem}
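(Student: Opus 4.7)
The plan is to induct on the leading monomial $\lt(p)$ in the given $t$-diagonal order. The base case $p=0$ is immediate. For the inductive step, assume $p\ne 0$ and, after relabeling, write $\lt(p)=x_{a_1}\cdots x_{a_r}$ with $x_{b_1}\cdots x_{b_r}$ the trailing monomial. The goal is to produce one reduction step by an element of $G$ that strictly lowers $\lt(p)$ while preserving the multiset hypothesis; induction then closes the argument.

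I would first verify that any reduction preserves the hypothesis: replacing $x_{a_j}x_{a_k}$ by $x_{\switch(K,a_j,a_k)}x_{\switch(K,a_k,a_j)}$ with $K\subseteq[t]$ merely swaps the $K$-coordinates of $a_j$ and $a_k$, so each $i$-th coordinate multiset for $i\in[t]$ is preserved, and since $K\subseteq[t]$ the bundled last-$(n-t)$-coordinate multiset is untouched. For the reduction itself, I would examine the distinct pairs among $a_1,\ldots,a_r$: given such a pair $a_j\ne a_k$, let $m$ be the smallest index with $a_{j,\delta_m}\ne a_{k,\delta_m}$, and after possibly swapping $j$ and $k$ assume $a_{j,\delta_m}>a_{k,\delta_m}$. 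Call the pair \emph{sortable} when some later $\delta$-index $l$ also satisfies $a_{j,\delta_l}>a_{k,\delta_l}$. For a sortable pair with $\delta_m\in[t]$, the $t$-diagonal property immediately gives $\lt(f_{\{\delta_m\},a_j,a_k})=x_{a_j}x_{a_k}$, so $f_{\{\delta_m\},a_j,a_k}\in G$ supplies the reduction. When $\delta_m=t+1$, one takes $K$ to be the set of $[t]$-positions where $a_j$ and $a_k$ disagree and checks, via the paper's bundling convention, that $f_{K,a_j,a_k}$ agrees with $f_{\{t+1\},a_j,a_k}$ and hence has the same leading monomial $x_{a_j}x_{a_k}$.

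What remains is to show that a sortable pair always exists. Suppose not: then every distinct pair $a_j,a_k$ lies in a ``canonical orientation'' where the first $\delta$-disagreement $a_{j,\delta_m}>a_{k,\delta_m}$ is followed only by $a_{j,\delta_l}\le a_{k,\delta_l}$ at later positions. I would argue by descent on the $\delta$-priority, mirroring the sorting argument of Caniglia, Guccione, and Guccione, that such a configuration characterizes the unique minimum of its multiset class in the $t$-diagonal order: every non-canonical monomial admits a sortable pair and strictly decreases via a slice-minor swap, and all such descending chains terminate at a unique canonical representative. Consequently $x_{a_1}\cdots x_{a_r}\le x_{b_1}\cdots x_{b_r}$, contradicting $\lt(p)>x_{b_1}\cdots x_{b_r}$, so a sortable pair must exist and the reduction is available. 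The main obstacle is precisely this last descent argument: making rigorous both (a) that any non-canonical monomial produces a sortable pair within itself, and (b) the uniqueness of the canonical representative in its multiset class, handled by induction on the $\delta$-priority by first pinning down $\delta_1$-coordinates, then $\delta_2$-coordinates within each $\delta_1$-fiber, and so on.
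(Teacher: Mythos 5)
Your argument is correct in outline and rests on the same combinatorial core as the paper's proof, but it is packaged differently, so let me compare. The paper assumes from the start that $p$ is reduced with respect to $G$ and proves $p=0$ directly, by a double induction on $n$ and on the number of factors $r$: the key observation is that in a reduced monomial the factors carrying the maximal $\delta_1$-entry must simultaneously carry the minimal $\delta_i$-entries for \emph{every} $i\ge 2$ (otherwise some $f_{\delta_i,a_j,a_l}\in G$ would reduce it), so the coordinate multisets determine both the top $\delta_1$-block and its complement, and one recurses on $r$. You instead induct on the leading monomial, which obliges you to exhibit a reduction of any non-minimal monomial; you get this by contradiction from the claim that a configuration with no sortable pair is the unique minimum of its multiset class. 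That claim is true, your verification that reductions preserve the multiset hypothesis is right, and your handling of the bundled index $t+1$ matches the paper's convention (do also record, via Lemma~\ref{lemconnected}, that all switches stay inside $S$, so every reducer you invoke really lies in $G$ and the new index tuples remain in $S$). The one place your sketch needs firming up is exactly the step you flag: ``pin down $\delta_1$, then $\delta_2$ within each $\delta_1$-fiber'' only works if at each stage you use the full strength of unsortability, namely that between two distinct $\delta_1$-fibers every element of the higher fiber is dominated in \emph{all} of $\delta_2,\dots,\delta_{t+1}$ at once; this is what determines how every later coordinate multiset distributes across the fibers before you may recurse into them, and it is precisely the paper's key step. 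Net effect: your route buys a self-contained ``find a reduction'' formulation at the cost of an extra descent-to-canonical-form argument (uniqueness plus termination) that the paper's reduced-implies-zero formulation avoids.
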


\begin{proof}
%By possibly reindexing we may assume that $\delta_i = i$ for all $i$.
%and by possibly multiplying the $i$th components of the elements of $N$
%by $-1$ we may assume that $\varphi = \max$.
We may assume that $p$ is reduced with respect to $G$.
%and that $a_1 \ge a_2 \ge \cdots \ge a_r$ and
%$b_1 \ge b_2 \ge \cdots \ge b_r$.
%We renumber the possible $(n-t)$-tuples of the entries of the indices
%of the variables in the chosen order,
%so without loss of generality $t = n-1$.
%Note that this also covers the case $t = n$.
%
If $r = 1$ or $n = 1$,
necessarily $p = 0$.
Now suppose that $r, n > 1$.
If the $\delta_1$ entries appearing in $a_1, \ldots, a_r$ are all the same,
the same holds for the $\delta_1$ entries in $b_1, \ldots, b_r$,
and by induction on $n$ the binomial $p$ reduces to $0$ with respect to
$\{f_{K,c,d} \in G: c_1 = d_1\}$.
So we may assume that
$a_{1\delta_1} = a_{2\delta_1} = \cdots = a_{s\delta_1} > a_{s+1,\delta_1} \ge \cdots \ge a_{r\delta_1}$
for some positive $s < r$.
If for some $i \in \{2, 3, \ldots, t+1\}$,
$j \le s$ and $l > s$,
the $\delta_i$ entry of $a_j$ is strictly bigger than the $\delta_i$ entry of $a_l$,
%either $a_{j\delta_i} > a_{l\delta_i}$
%or $(a_{j,t+1}, \ldots, a_{jn}) > (b_{j,t+1}, \ldots, b_{jn})$,
then $x_{a_1} \cdots x_{a_r}$ is not reduced with respect to $G$
as it can be reduced with respect to $f_{\delta_i,a_j,a_l} \in G$
(recall that in the context of diagonal orders,
$f_{t+1,a,b}$ stands for $f_{[t],a,b}$ in the usual sense).
%(respectively $f_{[t],a_j,a_l} \in G$).
But $p$ is assumed reduced,
which gives a contradiction.
So necessarily for all $i = 2, \ldots, t+1$,
all the minimal possible $\delta_i$ entries of $a_1, \ldots, a_r$
appear with correct multiplicities
as $\delta_i$ entries of $a_1, \ldots, a_s$.
Analogously,
$b_{1\delta_1} = b_{2\delta_1} = \cdots = b_{s\delta_1} = \max\{b_{j\delta_1}: j = 1, \ldots, r\}
> b_{s+1,\delta_1}, \ldots, b_{r\delta_1}$
and for each $i = 2, \ldots, t+1$,
all the minimal possible $\delta_i$ entries of $b_1, \ldots, b_r$
appear with correct multiplicities
as $\delta_i$ entries of $b_1, \ldots, b_s$.
Thus $a_1, \ldots, a_s, b_1, \ldots, b_s$
satisfy the conditions of the lemma,
and by necessity $a_{s+1}, \ldots, a_r, b_{s+1}, \ldots, b_r$
satisfy the conditions of the lemma.
By induction on~$r$,
up to reindexing,
by the reduced assumption,
$a_1 = b_1$,
$\ldots$, $a_s = b_s$,
$\ldots$, $a_r = b_r$.
Hence $p = 0$.
\end{proof}

The proof of the theorem above shows that if $t \ge n-1$,
then $p$ reduces to $0$ with respect to
$G = \{f_{i,a,b} : i \in [n], a, b \in S\}$.

\begin{thm}\label{thmGB}
Let $1 \le t \le n$,
and let $S$ be a $t$-switchable set.
%let $S$ be an equivalence class with respect to connectedness
%within some $t$-switchable set.
Then the set
$$
G = \{f_{K,a,b} : K \subseteq [t], a, b \in S \text{ are connected}\}
$$
is a (non-minimal) Gr\"obner basis for $\CItS$ in any $t$-diagonal
monomial order.
\end{thm}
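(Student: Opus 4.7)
The plan is to reduce to the case where $S$ is a single equivalence class under connectedness, and then invoke Lemma~\ref{lmGBred}, which contains the technical heart of the argument.

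For the reduction, partition $S = S_1 \cup \cdots \cup S_m$ into equivalence classes under connectedness. By Lemma~\ref{lemconnected}, for any $K \subseteq [t]$ and any $a, b$ connected in $S$, both $\switch(K,a,b)$ and $\switch(K,b,a)$ lie in the same equivalence class as $a$ and $b$. Hence every generator in $G$ involves only variables $x_c$ with $c$ ranging over a single $S_j$, so $G = G_1 \sqcup \cdots \sqcup G_m$ where $G_j = \{f_{K,a,b} : K \subseteq [t], a, b \in S_j\}$. Since the $G_j$'s involve pairwise disjoint variables and $\CItS = \sum_j \CItSi$ (as in the proof of Theorem~\ref{thmCItSisprime}), the standard fact that Gr\"obner bases for sums of ideals in disjoint variables are formed by disjoint union reduces the proof to the case where $S$ is itself a single equivalence class.

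For a single equivalence class $S$, Theorem~\ref{thmIStisprime} and Discussion~\ref{discSegre} identify $\CItS$ as the toric ideal of the Segre embedding associated to $S$; equivalently, it is the lattice ideal for the saturated lattice of binomials with matching marginals in each direction $i \in [t]$ and in the aggregated direction indexed by $[r_{t+1}] \times \cdots \times [r_n]$. In particular, $\CItS$ is spanned as a vector space over the base field by binomials $p = x_{a_1}\cdots x_{a_r} - x_{b_1}\cdots x_{b_r}$ with $a_\ell, b_\ell \in S$ whose multisets of $i$-th coordinates agree for each $i \in [t]$ and whose multisets of tuples in components $t+1, \ldots, n$ also agree. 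These are precisely the binomials treated by Lemma~\ref{lmGBred}, which shows that each such $p$ reduces to $0$ modulo $G$ in any $t$-diagonal monomial order.

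Since $G$ generates $\CItS$ (it already contains the defining generators $f_{i,a,b}$ of Definition~\ref{defFIt}) and every element of $\CItS$ is a linear combination of binomials that reduce to $0$ modulo $G$, it follows that $G$ is a Gr\"obner basis for $\CItS$. The main obstacle has already been dispatched inside Lemma~\ref{lmGBred}, whose proof performs the delicate diagonal reduction step-by-step using the prioritization $\delta_1, \ldots, \delta_{t+1}$; the word ``non-minimal'' in the statement reflects the many redundancies among the $f_{K,a,b}$ with $|K|\geq 2$, each of which is expressible in terms of the single-index generators $f_{\{i\},\,\cdot\,,\,\cdot\,}$.
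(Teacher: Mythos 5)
Your reduction to a single equivalence class is sound and matches the paper's (the $G_j$ live in pairwise disjoint sets of variables, so cross-class S-polynomials, or in your phrasing the disjoint-union principle, are harmless), and both routes funnel the real work into Lemma~\ref{lmGBred}. But your final inference has a genuine gap. You conclude that $G$ is a Gr\"obner basis because ``$G$ generates $\CItS$ and every element of $\CItS$ is a linear combination of binomials that reduce to $0$ modulo $G$.'' That criterion is vacuous: for \emph{any} generating set $G$ of \emph{any} ideal $I$, the ideal is spanned as a vector space by the elements $m\,g$ with $m$ a monomial and $g\in G$, and each $m\,g$ reduces to $0$ in one step. So ``spanned by elements reducing to $0$'' never implies the Gr\"obner property; reduction is not linear, and cancellation of leading terms among the spanning binomials is exactly what could go wrong.

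The argument is repairable, but you must say more. What you actually need is: (i) for $S$ a single equivalence class, \emph{every} binomial lying in $\CItS$ is, up to a scalar, of the matching-marginals form $x_{a_1}\cdots x_{a_r}-x_{b_1}\cdots x_{b_r}$ treated in Lemma~\ref{lmGBred} --- this does follow from the toric description in Theorem~\ref{thmIStisprime} and Discussion~\ref{discSegre}, since a binomial $c\,x^u+d\,x^v$ lies in the kernel of a monomial map only if the marginals of $u$ and $v$ agree and $c=-d$; and (ii) the Eisenbud--Sturmfels fact that the reduced Gr\"obner basis of a binomial ideal consists of binomials, so that $\init(\CItS)$ is generated by the leading terms of the binomials contained in $\CItS$. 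Given (i) and (ii), knowing that every binomial in the ideal reduces to $0$ (hence has leading term divisible by some $\lt(f_{K,a,b})$) does force $\init(\CItS)\subseteq(\lt(G))$, and you are done. Without invoking (ii), or some substitute, the step does not go through. The paper sidesteps this entirely by running Buchberger's criterion: an S-polynomial of two elements of the same $G_i$ is itself a binomial with matching marginals, hence reduces to $0$ by Lemma~\ref{lmGBred}; that check is complete on its own and needs no structure theory of binomial ideals. Either finish your argument with (i) and (ii) made explicit, or switch to the S-polynomial check.
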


\begin{proof}
Write $S = S_1 \cup \cdots \cup S_r$,
where the $S_i$ are the equivalence classes with respect to connectedness.
Then $G = G_1 \cup \cdots \cup G_r$,
where
$G_i = \{f_{K,a,b} : K \subseteq [t], a, b \in S_i\}$.

If $a, b \in S_i$ and $K = \{k_1, \ldots, k_l\}$,
then
$f_{K,a,b} = \sum_{i=1}^l
f_{k_i,
\switch(\{k_1, \ldots, k_{i-1}\},a,b),
\switch(\{k_1, \ldots, k_{i-1}\},b,a)}$,
so that $G_i \subseteq \CItSi \subseteq~(G_i)$.
It follows that $G \subseteq \CItS \subseteq~(G)$.

If $f \in G_i$, $g \in G_j$ and $i \not = j$,
then the S-polynomial of $f$ and $g$ trivially reduces to $0$
with respect to $G$
because the variables appearing in $f$ are disjoint from the variables
appearing in $g$.
Observe that elements of $G$ and S-polynomials of two elements from the same $G_i$ are either $0$
or are binomials of the form as in Lemma~\ref{lmGBred},
hence they reduce with respect to~$G$ to $0$,
proving that $G$ forms a Gr\"obner basis of $\CItS$.
\end{proof}

By the remark after the previous theorem,
if $t \ge n-1$,
then the smaller set $\{f_{i,a,b}: i \in [n], a, b \in S \text{ are connected}\}$
is a Gr\"obner basis of $\CItS$.

%but if $t < n-1$,
%then a Gr\"obner basis must contain the elements 
Ha~\cite{H} proved the theorem above
in the reverse lexicographic order
with the lexicographic order on the variables.
In a previous version of this paper,
we used the theorem above,
together with the structure theory of binomial ideals,
to give a proof that $\CItS$ are prime ideals.
In this version we verified primeness of $\CItS$ via flattenings of tensors
in Theorem~\ref{thmIStisprime}.

%%% 3, 5, 7, 8, 10, 11, 13, 14, 15, 17, 18, 19, 21, 22, 23, 24
\section{Examples}\label{sectexamples}

In this section we give examples
showing that the primary decomposition structure of the
conditional independence ideals $\CI$
can have embedded components.
%be quite intricate.
All computations were
performed using the package Binomials~\cite{K}
in the program Macaulay2~\cite{GS}.

The first two examples show that $\CI$ is not radical in general.
Thus the ideals $\CI$ are different from the conditional independence ideals
in Herzog et al.~\cite{HHHKR}, Ohtani~\cite{O}, and Ay and Rauh~\cite{AR}.

\begin{ex}
Let $r_1 = r_2 = r_3 = r_4 = 2$ and $t = 3,4$.
This ideal has 26 components, of which 17 are minimal and 9 are embedded.
In particular, the maximal ideal, which is $P^{\langle t\rangle}_\emptyset$,
is associated, and contains every $\CPS$.
However, there are other prime ideals in between the minimal primes
and this maximal associated prime.
For example,
\begin{align*}
\CPS = (x_{(1,1,1,1)},
&
x_{(1,1,1,2)},
x_{(1,1,2,1)},
x_{(1,1,2,2)},
x_{(1,2,1,1)},
x_{(1,2,1,2)},
x_{(1,2,2,1)},
\\
 &
x_{(2,1,1,2)},
x_{(2,1,2,1)},
x_{(2,1,2,2)},
x_{(2,2,1,1)},
x_{(2,2,1,2)},
x_{(2,2,2,1)},
x_{(2,2,2,2)})\end{align*}
contains
\begin{align*}
\CPT = (
x_{(1,1,1,1)}, &
x_{(1,1,1,2)},
x_{(1,1,2,1)},
x_{(1,1,2,2)},
x_{(1,2,1,1)},
x_{(1,2,1,2)},\\
 &
x_{(2,1,2,1)},
x_{(2,1,2,2)},
x_{(2,2,1,1)},
x_{(2,2,1,2)},
x_{(2,2,2,1)},
x_{(2,2,2,2)}),
\end{align*}
where
$S = \{(1,2,2,2), (2,1,1,1)\}$ and
$T = \{(1,2,2,1), (1,2,2,2), (2,1,1,1), (2,1,1,2)\}$.
By~Theorem~\ref{thmminimalcompisprime},
the $\CPT$-minimal component is $\CPT$,
but the $\CPS$ component is much more complicated.
For example,
Macaulay2 gives it 101 generators.

% The ideal in this situation.
%$\langle
%x_{1,1,0,0}x_{1,1,1,1} - x_{1,1,0,1}x_{1,1,1,0},
%x_{1,0,1,0}x_{1,1,1,1} - x_{1,0,1,1}x_{1,1,1,0},
%x_{1,0,0,1}x_{1,1,1,1} - x_{1,0,1,1}x_{1,1,0,1},
%x_{1,0,0,0}x_{1,1,1,0} - x_{1,0,1,0}x_{1,1,0,0},
%x_{1,0,0,0}x_{1,1,0,1} - x_{1,0,0,1}x_{1,1,0,0},
%x_{1,0,0,0}x_{1,0,1,1} - x_{1,0,0,1}x_{1,0,1,0},
%x_{0,1,1,0}x_{1,1,1,1} - x_{0,1,1,1}x_{1,1,1,0},
%x_{0,1,0,1}x_{1,1,1,1} - x_{0,1,1,1}x_{1,1,0,1},
%x_{0,1,0,0}x_{1,1,1,0} - x_{0,1,1,0}x_{1,1,0,0},
%x_{0,1,0,0}x_{1,1,0,1} - x_{0,1,0,1}x_{1,1,0,0},
%x_{0,1,0,0}x_{0,1,1,1} - x_{0,1,0,1}x_{0,1,1,0},
%x_{0,0,1,1}x_{1,1,1,1} - x_{0,1,1,1}x_{1,0,1,1},
%x_{0,0,1,0}x_{1,1,1,0} - x_{0,1,1,0}x_{1,0,1,0},
%x_{0,0,1,0}x_{1,0,1,1} - x_{0,0,1,1}x_{1,0,1,0},
%x_{0,0,1,0}x_{0,1,1,1} - x_{0,0,1,1}x_{0,1,1,0},
%x_{0,0,0,1}x_{1,1,0,1} - x_{0,1,0,1}x_{1,0,0,1},
%x_{0,0,0,1}x_{1,0,1,1} - x_{0,0,1,1}x_{1,0,0,1},
%x_{0,0,0,1}x_{0,1,1,1} - x_{0,0,1,1}x_{0,1,0,1},
%x_{0,0,0,0}x_{1,1,0,0} - x_{0,1,0,0}x_{1,0,0,0},
%x_{0,0,0,0}x_{1,0,1,0} - x_{0,0,1,0}x_{1,0,0,0},
%x_{0,0,0,0}x_{1,0,0,1} - x_{0,0,0,1}x_{1,0,0,0},
%x_{0,0,0,0}x_{0,1,1,0} - x_{0,0,1,0}x_{0,1,0,0},
%x_{0,0,0,0}x_{0,1,0,1} - x_{0,0,0,1}x_{0,1,0,0},
%x_{0,0,0,0}x_{0,0,1,1} - x_{0,0,0,1}x_{0,0,1,0}\rangle$

Keeping $r_1 = r_2 = r_3 = r_4 = 2$
and changing $t$ to $2$,
the ideal $\CI$ has 31 minimal primes and 11 embedded primes
including the maximal ideal.
For $t =1 $ and the same $r_1, \ldots, r_4$,
the ideal $\CIo$ has 17 components,
and by work in~\cite{AR} these components are all primes and there are
no other components.
%these are all the components and they are all primes.
%proves that the ideal $\CIo$ is radical
%and therefore the 17 components are all primes.
\end{ex}

The following example shows that $\CI$ is not even radical for $n = 3$.
Again, if $t=1$ the work in~\cite{AR} (and \cite{Fink} since $n = 3$) proves $\CIo$
is radical.  Therefore the  counterexample
uses $t = 2,3$.
Thomas Kahle brought the following example to our attention:

\begin{ex}
The simplest example is when $r_1 = r_2 = 2$ and $r_3 = 4$.
The ideal $\CIt$ with $t = 2,3$
has 29 minimal components and one embedded component associated to the
maximal ideal.  We note that the minimal components are all prime by
Theorem~\ref{thmminimalcompisprime}.
\end{ex}

We consider it an interesting open question as to
whether the minimal components are prime when $t\neq 1, n$. While the
$\CI$ are not lattice basis ideals (see Example~\ref{exItItt}),
one might first attack this
question by considering lattice basis ideals in general, or just those
with square-free terms.  However, the following example given to us by
Thomas Kahle shows that a general lattice basis ideal with square-free
terms does not have to have prime minimal components.

\begin{ex}
The ideal $(x_4x_8 - x_1x_9, x_4x_6- x_7x_9, x_2x_5 - x_3x_9, x_2x_3-x_5x_6)$
in variables $x_1, x_2, \ldots, x_9$ over a field
is equidimensional
and it has 6 components,
all of which are minimal and one of which is not prime.
\end{ex}

\section*{Acknowledgments}

We are grateful to Thomas Kahle for help with the examples.
We thank
Tony Geramita for pointing us to Tai Ha's paper~\cite{H},
and Seth Sullivant, Daniel Erman, Claudiu Raicu, and Mike Stillman
for helpful conversations about tensors.
%Mike Stillman for his conversations with us.
We also thank the editor
and the anonymous referees for helping us streamline
and improve the presentation.

%\bibliography{../new}
%%%%% This is what I have with Sean for doing repeat authors --- not
%%%%% needed for this at the moment.
%\providecommand{\bysame}{\leavevmode\hbox to3em{\hrulefill}\thinspace}
%\providecommand{\MR}{\relax\ifhmode\unskip\space\fi MR }
% \MRhref is called by the amsart/book/proc definition of \MR.
%\providecommand{\MRhref}[2]{%
%  \href{http://www.ams.org/mathscinet-getitem?mr=#1}{#2}
%}
%\providecommand{\href}[2]{#2}

\end{document}